\makeatletter\@addtoreset{equation}{myequation}\makeatother
\newtheorem{theorem}{Theorem}[section]
\newtheorem{corollary}[theorem]{Corollary}
\newtheorem{lemma}[theorem]{Lemma}
\newtheorem{proposition}[theorem]{Proposition}
\newcommand{\beq}{\begin{equation}}
\newcommand{\eeq}{\end{equation}}
\newcommand{\be}[1]{\begin{equation}\label{#1}}
\newcommand{\ee}{\end{equation}}
\renewcommand{\(}{\left(}
\renewcommand{\)}{\right)}
\newcommand{\R}{\mathbb R}
\newcommand{\N}{{\mathbb N}}
\renewcommand{\S}{{\mathbb S}}
\newcommand{\nrm}[2]{\|{#1}\|_{\mathrm L^{#2}(\S^1)}}
\newcommand{\Pot}{\mathcal V}
\definecolor{darkgreen}{rgb}{0.2,0.7,0.1}
\newcounter{Hequation}
\g@addto@macro\equation{\stepcounter{Hequation}}\makeatother
\newcommand{\msc}[1]{\href{https://zbmath.org/classification/?q=cc:#1}{#1}}
\title[Monotonicity of the period]{Monotonicity of the period and positive\\ periodic solutions of a quasilinear equation}
\author{J.~Dolbeault, M.~Garc\'ia-Huidobro and R.~Man\'asevich}
\begin{document}
\maketitle

\begin{abstract} We investigate the monotonicity of the minimal period of periodic solutions of quasilinear differential equations involving the $p$-Laplace operator. First, the monotonicity of the period is obtained as a function of a Hamiltonian energy in two cases. We extend to $p\ge2$ classical results due to Chow-Wang and Chicone for $p=2$. Then we consider a differential equation associated with a fundamental interpolation inequality in Sobolev spaces. In that case, we generalize monotonicity results by Miyamoto-Yagasaki and Benguria-Depassier-Loss to $p\ge2$.
\end{abstract}

\maketitle
\thispagestyle{empty}

%%%%%%%%%%%%%%%%%%%%%%%%%%%%%%%%%%%%%%%%%%%%%%%%%%%%%%%%%%%%%%%%%%%%%%
%%%%%%%%%%%%%%%%%%%%%%%%%%%%%%%%%%%%%%%%%%%%%%%%%%%%%%%%%%%%%%%%%%%%%%
\section{Introduction}\label{Sec:Intro}

In this paper we study monotonicity properties of the \emph{minimal period} of positive periodic solutions of
\be{potential}
\big(\phi_p(w')\big)'+\Pot'(w)=0\,,
\ee
where $p\ge 2$, $\phi_p(s)=|s|^{p-2}s$ and $w\mapsto(\phi_p(w'))'$ is the $p$-Laplace operator. The energy $E=|w'|^p/{p'}+\Pot(w)$ is conserved if $w$ solves~\eqref{potential}. Here $p'=p/(p-1)$ is the classical notation for the H\"older conjugate of $p$. Otherwise $'$ denotes the derivative of a function of one real variable. We are interested in the positive periodic solutions with energy less than $E_*:=\Pot(0)$ which are enclosed by an homoclinic orbit attached to $(w,w')=(0,0)$. We shall further assume that the nonnegative potential~$\Pot$ is such that these solutions are uniquely determined, up to translations, by the energy level~$E$, with minimal period $T(E)$. Our purpose is indeed to study under which conditions~$T$ is an increasing function of $E$ in the range $0\le E<E_*$. Furthermore we will consider the asymptotic behaviour of $T(E)$ as $E\to0_+$ and as $E\to(E_*)_-$. Surprisingly enough, the limit of $T(E)$ as $E\to0_+$ is different in the cases $p=2$ and $p>2$. We shall assume that $\Pot$ is a potential defined on $\R$ such that
\be{H1}\tag{H1}
\mbox{\parbox{10.4cm}{\emph{$\Pot$ is a $C^2$ function on $\R$ and there are $A$, $B\in\R$ with $0<A<B$ such that $\Pot(0)=\Pot(B)=E_*>0$, $\Pot'(0)=\Pot(A)=\Pot'(A)=0$, $\Pot''(A)>0$, and $0<\Pot(w)<E_*$ for all $w\in(0,A)\cup(A,B)$.}}}
\ee
See Fig.~\ref{Fig1}. The point $(w,w')=(A,0)$ is a stationary solution of~\eqref{potential} giving rise to a \emph{center} surrounded by closed periodic orbits, such that $0<w<B$, with minimal period $T(E)$. These periodic orbits are enclosed by the homoclinic orbit attached \hbox{to~$(0,0)$}.

\clearpage\begin{figure}[ht]\begin{center}
\includegraphics[width=\textwidth]{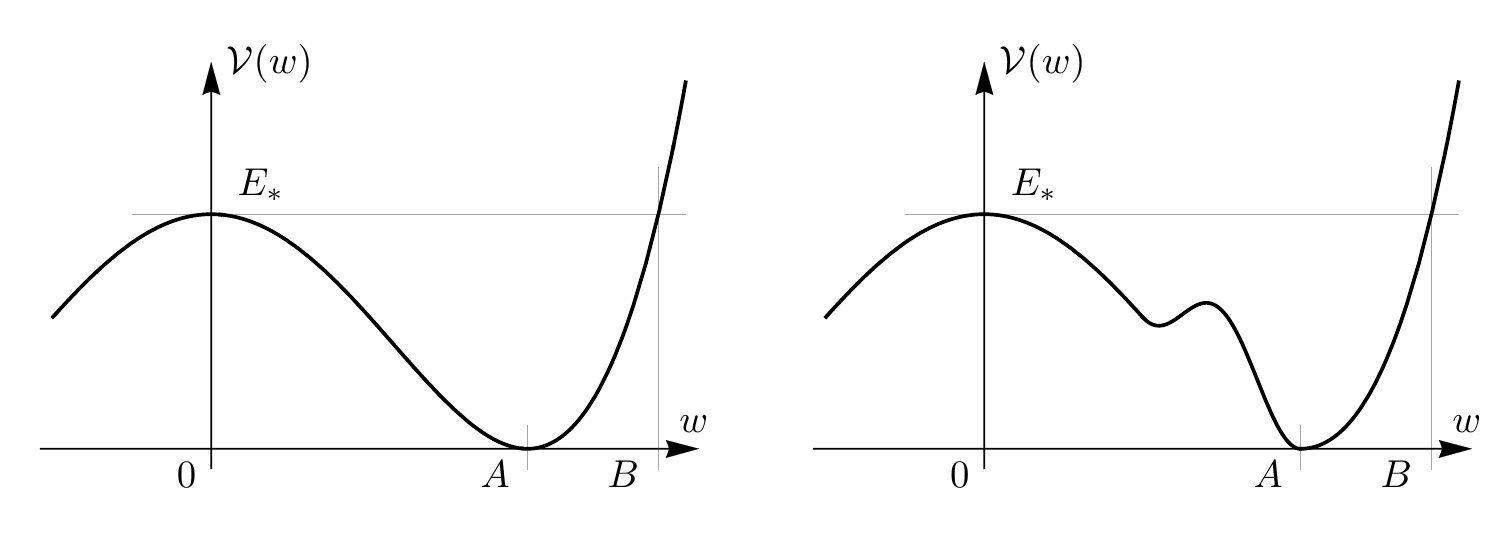}
\begin{caption}{\label{Fig1}\kern-2pt Two examples of $\Pot$ satisfying assumption~\eqref{H1}. Our monotonicity results require the stronger assumptions on $\Pot$ of Theorems~\ref{Thm:Main1} and~\ref{Thm:Main2} which, typically, hold in the left case but not in the right case.}\end{caption}
\end{center}\end{figure}

Our first result is based on an extension to $p>2$ of a result of Chow and Wang~\cite[Theorem~2.1]{Chow} for $p=2$ if $\Pot$ is smooth, positive, with a specific behaviour at $w=0$.
%---------------------------------------------------------------------
\begin{theorem}\label{Thm:Main1} Let $p>2$. Assume that $\Pot$ satisfies~\eqref{H1} and
\be{H2}\tag{H2}
(w-A)\,\Pot'(w)>0\quad\forall\,w\in(0,A)\cup(A,B)\,.
\ee
If $w\mapsto|\Pot'(w)|^2-p'\,\Pot(w)\,\Pot''(w)$ is positive, then $E\mapsto T(E)$ is increasing on $(0,E_*)$.\end{theorem}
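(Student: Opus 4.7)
The plan is to express $T(E)$ as an integral over a fixed interval via an energy-regularising substitution, then differentiate under the integral sign and reduce monotonicity to a pointwise sign condition that follows directly from the hypothesis.

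From the conserved energy $E=\tfrac1{p'}|w'|^p+\Pot(w)$ one writes immediately
\[ T(E) = 2\int_{w_-(E)}^{w_+(E)}\frac{dw}{\bigl(p'(E-\Pot(w))\bigr)^{1/p}}, \]
where $w_-(E)<A<w_+(E)$ are the two roots of $\Pot(w)=E$. Since $\Pot''>0$ on $(0,B)$ and $\Pot'(A)=0$, the potential $\Pot$ is strictly decreasing on $(0,A)$ and strictly increasing on $(A,B)$, so the map $w\mapsto(p'\Pot(w))^{1/p}$ admits two monotone inverses $\alpha:[0,(p'E_*)^{1/p}]\to[A,B]$ and $\beta:[0,(p'E_*)^{1/p}]\to[0,A]$, characterised by $p'\Pot(\alpha(s))=s^p=p'\Pot(\beta(s))$. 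Substituting $w=\alpha(s)$ on the right branch, $w=\beta(s)$ on the left branch, and then rescaling $s=s_E\,t$ with $s_E:=(p'E)^{1/p}$, I obtain
\[ T(E) = 2\int_0^1 \frac{\alpha'(s_E t)-\beta'(s_E t)}{(1-t^p)^{1/p}}\,dt, \]
an integral over the fixed interval $[0,1]$ whose $E$-dependence is concentrated in $s_E$.

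Differentiating under the integral sign and using $s_E'(E)=s_E/(pE)$ gives
\[ T'(E) = \frac{2\,s_E}{p\,E}\int_0^1 \frac{\bigl[\alpha''(s_E t)-\beta''(s_E t)\bigr]\,t}{(1-t^p)^{1/p}}\,dt, \]
so monotonicity reduces to showing that $\alpha''-\beta''$ is pointwise positive. Differentiating the defining relation $p'\Pot(\alpha(s))=s^p$ twice and simplifying by means of the identity $p=(p-1)p'$ yields the key formula
\[ \alpha''(s) = \frac{p(p-1)\,s^{p-2}}{p'\,(\Pot'(\alpha))^3}\Bigl[(\Pot'(\alpha))^2-p'\,\Pot(\alpha)\,\Pot''(\alpha)\Bigr], \]
and the same expression with $\alpha$ replaced by $\beta$. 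The bracket is positive by hypothesis; combined with $\Pot'(\alpha)>0$ on $(A,B)$ and $\Pot'(\beta)<0$ on $(0,A)$, this forces $\alpha''(s)>0>\beta''(s)$ for all $s\in(0,s_E)$, which immediately produces $T'(E)>0$.

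The main technical obstacle I expect is the justification of differentiation under the integral: for $p\in(2,4)$ the second derivatives $\alpha''$ and $\beta''$ blow up at $s=0$ like $s^{p/2-2}$, because the condition $\Pot''(A)>0$ gives the asymptotics $\alpha(s)-A\sim c\,s^{p/2}$ and $A-\beta(s)\sim c'\,s^{p/2}$. One must therefore verify that the integrand has a singularity of order $t^{p/2-1}$ at $t=0$, integrable for $p>0$, together with the standard $(1-t)^{-1/p}$ singularity at $t=1$, integrable for $p>1$, and exhibit a uniform integrable majorant on compact subsets of $(0,E_*)$ to legitimise the interchange. Once this routine but slightly delicate check is in place, the sign analysis above concludes the proof.
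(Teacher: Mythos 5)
Your argument is correct and, after undoing your substitution, lands on exactly the same identity as the paper, but it gets there by a genuinely different route. The paper follows Chow and Wang: with $\gamma(w,E)=p'(E-\Pot(w))$ it introduces the auxiliary integrals $I(E)=\int\gamma^{1/p'}\,dw$ and $J(E)=\int(\gamma-p'E)\,\gamma^{1/p'}\,dw$, computes $J$ a second time via an integration by parts involving $(\Pot/\Pot')'$, and compares the two expressions for $d^2J/dE^2$ to obtain $T'(E)=\frac{2}{p'E}\int_{w_1}^{w_2}\mathcal R(w)\,\gamma(w,E)^{-1/p}\,\Pot'(w)^{-2}\,dw$ with $\mathcal R=(\Pot')^2-p'\,\Pot\,\Pot''$ (Theorem~\ref{Tchow}). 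You instead flatten the energy dependence with $s=(p'\Pot)^{1/p}$, $s=s_E t$, differentiate under the integral over the fixed interval $[0,1]$, and read the sign off the explicit formula for $\alpha''$ and $\beta''$, whose bracket is precisely $\mathcal R$; converting $\alpha''(s)\,s\,ds$ back to $dw$ produces the factor $(p-1)/p=1/p'$ and recovers the paper's formula verbatim. What your route buys: the sign condition comes from a two-line implicit differentiation rather than the somewhat magical $J$-identity, and the singularity at the center is made explicit ($\alpha''\sim c\,s^{p/2-2}$, hence an integrable $t^{p/2-1}$ behaviour of the integrand), which in particular disposes of the caveat ``if the integral in the right hand side is finite'' left in Theorem~\ref{Tchow}. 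What the paper's route buys: it never differentiates the inverse branches, so the delicate interchange of limit and integral is absorbed into more robust manipulations of $I$ and $J$. The one debt you acknowledge --- a uniform integrable majorant legitimising differentiation under the integral --- is indeed routine: for $E$ in a compact subset of $(0,E_*)$ one has $|\alpha''(s_Et)-\beta''(s_Et)|\,t\le C\,t^{p/2-1}$ near $t=0$ uniformly in $E$, while near $t=1$ the factor $(1-t^p)^{-1/p}$ is integrable and $\alpha''(s_E)$, $\beta''(s_E)$ stay bounded because $\Pot'(w_i(E))$ is bounded away from zero there. (Your use of $\Pot''>0$ is confined to guaranteeing the two monotone branches $\alpha$, $\beta$, which is the same role the paper assigns to the condition $(w-A)\,\Pot'(w)>0$.)
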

%---------------------------------------------------------------------
\noindent Notice that $w\mapsto|\Pot'(w)|^2-p'\,\Pot(w)\,\Pot''(w)$ is a positive function if and only if the function $w\mapsto\Pot(w)\,|\Pot'(w)|^{-p'}$ is monotonically increasing. The right case in Fig.~\ref{Fig1} does not satisfy~\eqref{H2}.

 Our second result is an extension to $p>2$ of the monotonicity result~\cite[Theorem~A]{CH} for $p=2$, under \emph{Chicone's condition}, which is a growth condition as in Theorem~\ref{Thm:Main1}, but involves higher order derivatives of the potential $\Pot$.
%---------------------------------------------------------------------
\begin{theorem}\label{Thm:Main2} Let $p>2$. Assume that $\Pot$ is a $C^3$ function on $\R^+$ which satisfies~\eqref{H1}. If $\Pot/(\Pot')^2$ is a convex function, then $E\mapsto T(E)$ is increasing on $(0,E_*)$.\end{theorem}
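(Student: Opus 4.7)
\smallskip\noindent\textbf{Proof plan.} The idea is to reduce the period $T(E)$ to an integral over a fixed interval independent of $E$ via a well-chosen substitution, and then exploit the convexity of the Chicone function $I(w):=\Pot(w)/\Pot'(w)^2$ to control the sign of $T'(E)$.

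First, I would use the substitution $\Pot(w)=E(1-t^p)$, $t\in[0,1]$, separately on the two monotonicity branches $w\in[w_1(E),A]$ and $w\in[A,w_2(E)]$. Writing $w_-(t,E)\in[0,A]$ and $w_+(t,E)\in[A,B]$ for the two inverse branches and changing variables in $T(E)=2(p')^{-1/p}\int_{w_1}^{w_2}(E-\Pot)^{-1/p}\,dw$ yields
\[
T(E)=2p\,(p')^{-1/p}\,E^{1/p'}\int_0^1 t^{p-2}\chi(t,E)\,dt,\qquad\chi(t,E):=\frac1{\Pot'(w_+(t,E))}-\frac1{\Pot'(w_-(t,E))}>0.
\]
Since the integration interval no longer depends on $E$, differentiation under the integral sign is legitimate once integrability is checked, so the task is reduced to computing and understanding $\partial_E\chi$.

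The key algebraic step is to differentiate the implicit equation $\Pot(w_\pm)=E(1-t^p)$, obtaining $\partial_E w_\pm=(1-t^p)/\Pot'(w_\pm)$, and to combine this with the identity
\[
\frac{\Pot''(w)}{\Pot'(w)^3}=\frac1{2\Pot(w)}\!\left(\frac1{\Pot'(w)}-I'(w)\right),
\]
which is a direct rewriting of $I'(w)=1/\Pot'(w)-2\Pot(w)\Pot''(w)/\Pot'(w)^3$. The crucial observation is that the substitution $\Pot(w_\pm)=E(1-t^p)$ cancels the factor $1-t^p$, giving the clean formula
\[
\partial_E\chi(t,E)=\frac{1}{2E}\bigl[I'(w_+(t,E))-I'(w_-(t,E))-\chi(t,E)\bigr].
\]
Applying the product rule to $E^{1/p'}\int_0^1 t^{p-2}\chi\,dt$ and collecting terms produces
\[
T'(E)=2p\,(p')^{-1/p}\,E^{1/p'-1}\!\left[\frac{p-2}{2p}\!\int_0^1\! t^{p-2}\chi\,dt+\frac12\!\int_0^1\! t^{p-2}\bigl(I'(w_+)-I'(w_-)\bigr)dt\right].
\]
The first bracket, coming from differentiating the prefactor $E^{1/p'}$, is nonnegative and strictly positive for $p>2$; it vanishes for $p=2$, recovering Chicone's classical formula as a limiting case. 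The second bracket is nonnegative because $w_-<A<w_+$ and the convexity of $I$ makes $I'$ nondecreasing. Hence $T'(E)>0$.

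The main obstacle is the point $w=A$, where $\Pot'$ vanishes and $I=\Pot/\Pot'^2$ is a priori singular. Thanks to $\Pot\in C^3$ and $\Pot''(A)\neq0$, a Taylor expansion shows that $I$ extends as a $C^1$ function on all of $(0,B)$ with $I(A)=1/(2\Pot''(A))$ and $I'(A)=-\Pot'''(A)/(2\Pot''(A)^2)$; this is precisely where the $C^3$ hypothesis enters and makes the convexity assumption on $I$ meaningful across $A$. Moreover, near $t=1$ both $\chi$ and $\partial_E\chi$ inherit $(1-t)^{-1/2}$ singularities from the quadratic vanishing of $\Pot$ at $A$, which are integrable against $t^{p-2}$ and yield locally uniform bounds in $E$ that justify the differentiation under the integral sign.
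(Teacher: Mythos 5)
Your argument is correct, and it takes a genuinely different route from the paper. The paper's proof reparametrizes via $\sqrt E\,\sin\theta=\mathsf h(w)$ with $\mathsf h=\pm\sqrt{\Pot}$, differentiates under the integral, integrates by parts once, and reduces the sign of $dT/dE$ to that of the third-order expression $3\,(\mathsf h'')^2-\mathsf h'\,\mathsf h'''$, which it identifies with $\tfrac{|\Pot'|^4}{8\,\Pot^2}\big(\Pot/|\Pot'|^2\big)''$; convexity of $I=\Pot/(\Pot')^2$ thus enters pointwise through $I''\ge0$ in the integrand. You instead normalize by $\Pot(w)=E\,(1-t^p)$ and exploit the cancellation of the factor $1-t^p$ between $\partial_E\big(1/\Pot'(w_\pm)\big)=-(1-t^p)\,\Pot''(w_\pm)/\Pot'(w_\pm)^3$ and $\Pot(w_\pm)=E\,(1-t^p)$ to get the first-order relation $\partial_E\chi=\tfrac1{2E}\big(I'(w_+)-I'(w_-)-\chi\big)$, so that convexity is used only through the monotonicity of $I'$ compared at the two branch points $w_-<A<w_+$. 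This dispenses with the integration by parts and with any third derivative of $\mathsf h$, and is in fact structurally closer to the paper's Section~\ref{section4} strategy (Lemma~\ref{Lem:K}), where the two branches are likewise compared through a monotone auxiliary function. I verified the key computations: the prefactor $E^{1/p'}$, the coefficient $\tfrac1{p'}-\tfrac12=\tfrac{p-2}{2p}$, the formula for $\partial_E\chi$, and the $O\big((1-t)^{-1/2}\big)$ integrable singularity at $t=1$ are all right. Two small remarks. First, your Taylor expansion at $A$ has a wrong constant: one finds $I'(A)=-\,\Pot'''(A)/\big(3\,\Pot''(A)^2\big)$, not $-\,\Pot'''(A)/\big(2\,\Pot''(A)^2\big)$; this is immaterial, since all you need is that $I$ extends as a continuous convex (hence $C^1$ up to a countable set) function across $A$ so that $I'$ is nondecreasing there. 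Second, the existence of single-valued inverse branches $w_\pm$ requires $(w-A)\,\Pot'(w)>0$ on $(0,A)\cup(A,B)$, which \eqref{H1} alone does not give; it is worth observing that the convexity of $I$ forces it, since $I$ would blow up at any interior zero of $\Pot'$ other than $A$ --- the paper's proof needs the same fact to invert $\mathsf h$.
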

%---------------------------------------------------------------------
A central motivation for this paper arises from the study of a minimization problem which is exposed in Appendix~\ref{App} (with additional references) and can be reduced to the study of all positive periodic solutions on $\R$ of
\be{ELS2}
\big(\phi_p(w')\big)'+\phi_q(w)-\phi_p(w)=0\,.
\ee
Equation~\eqref{ELS2} enters in the framework of~\eqref{potential} for $A=1$ and potential
\be{initpot}
\Pot(w)=\tfrac1q\,|w|^q-\tfrac1p\,|w|^p-\big(\tfrac1q-\tfrac1p\big)\,,
\ee
so that $E_*=1/p-1/q$. A positive periodic solution exists only if the energy level satisfies the condition $E<E_*$. Again we let $T(E)$ be the minimal period of such a solution. We cannot apply Theorems~\ref{Thm:Main1} and~\ref{Thm:Main2} (see Appendix~\ref{App:Comparison}) and we shall prove directly the following result, which is the main contribution of the paper.
%---------------------------------------------------------------------
\begin{theorem}\label{Thm:Main3} Let $p$ and $q$ be two real numbers such that $2<p<q$ and consider the positive periodic solutions of~\eqref{ELS2}. Then the map $E\mapsto T(E)$ is increasing on $(0,E_*)$ with $\lim_{E\to0_+}T(E)=0$ and $\lim_{E\to(E_*)_-}T(E)=+\infty$.\end{theorem}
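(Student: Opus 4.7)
The plan is to establish the three claims in Theorem~\ref{Thm:Main3} in sequence. First, conservation of the energy $E=\frac1{p'}|w'|^p+\Pot(w)$ yields the standard expression
\[
T(E)=2\int_{w_-(E)}^{w_+(E)}\frac{dw}{\bigl[p'(E-\Pot(w))\bigr]^{1/p}},
\]
where the turning points satisfy $0<w_-(E)<1<w_+(E)<B$ and $\Pot(w_\pm(E))=E$. For~\eqref{initpot} one has $\Pot'(w)=w^{p-1}(w^{q-p}-1)$, strictly negative on $(0,1)$ and strictly positive on $(1,B)$, and $\Pot''(1)=q-p>0$, so both inverse branches $W_-,W_+$ of $\Pot$ are smooth on $[0,E_*)$.

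For the limit $E\to 0_+$, I would rescale near the center by $w=1+E^{1/2}\xi$ and $t=E^{(p-2)/(2p)}s$, so that the conserved energy becomes, up to a vanishing remainder, $\frac1{p'}|\xi_s|^p+\frac{q-p}{2}\xi^2=1$. The limiting autonomous problem is a $p$-Laplace oscillator at a non-degenerate minimum whose period $T_0>0$ is an explicit constant, hence $T(E)\sim T_0\,E^{(p-2)/(2p)}\to 0$ since $p>2$. For $E\to (E_*)_-$, I would split the integral at a fixed $\delta\in(0,1)$. The piece on $[\delta,w_+(E)]$ remains uniformly bounded; on $[w_-(E),\delta]$, the expansion $\Pot(w)=E_*-w^p/p+O(w^q)$ near $w=0$ and the scaling $w=(p(E_*-E))^{1/p}\tilde w$ produce an integral whose $1/\tilde w$ tail at infinity yields a logarithmic divergence in $-\log(E_*-E)$.

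For the monotonicity, which I expect to be the main obstacle, I would split the orbit at $w=1$, change variables $v=\Pot(w)$ on each half, and then successively substitute $v=\rho^2$ and $\rho=\sqrt E\,\sigma$ to obtain
\[
T(E)=\frac{2\,E^{(p-2)/(2p)}}{(p')^{1/p}}\int_0^1\frac{h'(\sqrt E\,\sigma)}{(1-\sigma^2)^{1/p}}\,d\sigma,\qquad h(\rho):=W_+(\rho^2)-W_-(\rho^2).
\]
Because $\Pot''(1)=q-p>0$, the Morse lemma makes $h$ a smooth odd function on $[0,\sqrt{E_*})$ with $h'(0)=2\sqrt{2/(q-p)}>0$ and $h''(0)=0$; moreover $h'(\rho)\to+\infty$ as $\rho\to\sqrt{E_*}$ because $W_-'(v)\to-\infty$ as $v\to E_*$ (since $\Pot'(0)=0$). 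Since the prefactor $E^{(p-2)/(2p)}$ is strictly increasing in $E$ (as $p>2$), monotonicity of $T$ is granted by convexity of $h$, that is, $h''\ge 0$ on $[0,\sqrt{E_*})$. This convexity is the heart of the argument. My strategy for it would be to differentiate the identity $\Pot(W_\pm(\rho^2))=\rho^2$ twice in $\rho$, substitute back, and reduce the desired inequality to a pointwise scalar statement in $w$ which can be verified using the explicit structure $\Pot'(w)=w^{p-1}(w^{q-p}-1)$ together with the identity $w\,\Pot'(w)=w^q-w^p$. Should pointwise convexity fail somewhere, the $E^{(p-2)/(2p)}$ prefactor provides enough slack to accommodate a weaker quantitative bound on $h''$ and still deliver $T'(E)>0$.
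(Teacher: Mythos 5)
Your asymptotic claims are correct and match the paper's Lemma~\ref{asymp} (the rescaling near the center gives $T(E)\sim T_0\,E^{\frac12-\frac1p}\to0$ for $p>2$, and the $w^p$ behaviour of $E_*-\Pot$ near $w=0$ forces the logarithmic divergence at $E_*$). Your reduction of the monotonicity is also correct, and it is essentially the paper's own device in disguise: your representation of $T(E)$ is exactly the Chicone-type formula~\eqref{period:Chicone} after folding $\theta\mapsto-\theta$ and substituting $\sigma=\sin\theta$, and your sufficient condition $h''\ge0$, where $h$ is (twice) the odd part of $\mathsf h^{-1}$ with $\mathsf h(w)=\mathrm{sign}(w-1)\sqrt{\Pot(w)}$, is implied by, and slightly weaker than, the paper's condition that the function $K$ of~\eqref{fK} be decreasing. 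Indeed one computes $K=-\,p\,(\mathsf h^{-1})''$ along the orbit, so ``$K$ decreasing'' means ``$(\mathsf h^{-1})''$ increasing'', which yields $(\mathsf h^{-1})''(\rho)\ge(\mathsf h^{-1})''(-\rho)$, i.e.\ $h''\ge0$. So the framework is the right one.

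The genuine gap is that you never prove $h''\ge0$ for the potential~\eqref{initpot} with $2<p<q$: you only assert that the inequality ``can be verified using the explicit structure'' of $\Pot'$. That verification is precisely where the entire difficulty of Theorem~\ref{Thm:Main3} lies and where the paper spends all of Section~\ref{Section:application}. After the reduction one is left with the nonnegativity of a genuinely complicated expression --- in the paper, a polynomial $f(a,m,y,z)$ in $a=1/p$, $m=q/p$, $y$ and $z=y^{m-1}$ --- over the whole orbit range, and this is established only by splitting into the cases $m=2$, $m>2$ (where convexity of $h$ and $1/h'^2$ suffices, Corollary~\ref{maincor}) and $1<m<2$ (which needs Taylor expansions at the three distinguished points, a compactness/continuation argument in $m$, and the elimination/discriminant computation of Lemma~\ref{Lem:finalContradiction} to rule out interior double zeros). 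Nothing in your proposal substitutes for this. Your fallback --- that the increasing prefactor $E^{\frac12-\frac1p}$ provides ``enough slack'' should pointwise convexity fail --- is not quantified: the gain $\frac{p-2}{2p}\,\frac{T(E)}E$ is an explicit quantity, but you offer no bound showing that a putative negative contribution from the $h''$ integral is dominated by it, and no such bound is available without exactly the control on $h''$ that is missing. As it stands, the proposal sets up the correct reduction but leaves the core of the proof undone.
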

%---------------------------------------------------------------------
Let us consider
\[
\mathcal H(u,v):=\Pot(u)+\tfrac1{p'}\,|v|^{p'}
\]
and $p'=p/(p-1)$, Equation~\eqref{potential} can be rewritten as the Hamiltonian system of equations
\[
u'=\frac{\partial\mathcal H}{\partial v}=\phi_{p'}(v)\quad\mbox{and}\quad v'=-\,\frac{\partial\mathcal H}{\partial u}=-\,\Pot'(u)
\]
with $w=u$ and $w'=\phi_{p'}(v)$. Although this Hamiltonian structure may look similar to those of~\cite[Theorem~1]{Schaaf}, we have a definitely different set of assumptions. In~\cite{Rothe_1993}, a very large set of Hamiltonian systems is considered but again our assumptions differ, for instance for the simple reason that the function $\phi_{p'}$ is not of class~$C^2$. Further references on the period function can be found in~\cite{Yagasaki-2013}. There are various other extensions of Chicone's result~\cite{CH}, see for instance~\cite{MR2143515}, but they do not cover the case of Theorem~\ref{Thm:Main3}. Notice that there is a computation in~\cite[Section~4]{MR2143515} which turns out to be equivalent to an argument used in the proof of our Theorem~\ref{Tchow} (see below in Section~\ref{section2}), although it is stated neither in that form nor as in~Theorem~\ref{Thm:Main1}.

The monotonicity of the minimal period as a function of the energy level is a question of interest by itself and particularly in the model case of the potential $\Pot$ given by~\eqref{initpot}, even in the case $p=2$. For this last case we quote from~\cite{BDL} that: ``It is somewhat surprising that, despite its ubiquity, the monotonicity of the period function for [this problem] in full generality was only established recently.'' In~\cite{Miyamoto_2013}, Miyamoto and Yagasaki indeed proved the monotonicity of the period function only for $p=2$ and for $q$ an integer. In~\cite{Yagasaki-2013}, Yagasaki generalized the result to all values of $q>2$. Both papers,~\cite{Miyamoto_2013,Yagasaki-2013}, rely on Chicone's criterion which is difficult to apply to non-integer values of $q$. Benguria, Depassier and Loss in~\cite{BDL} considered the positive periodic solutions of $w''+w^{q-1}-w=0$ (\emph{i.e.}, the case $p=2$ in our notation) and gave a simplified proof of the monotonicity of the period established in~\cite{Miyamoto_2013} using new changes of variables. Our contribution, in the case $p>2$, is a non-trivial extension of the $p=2$ case, it also relies on some of Chicone's ideas, but requires new techniques, especially in the case of Theorem~\ref{Thm:Main3}.

While the solutions of $-\Delta_pu+|u|^{q-2}\,u=\lambda\,|u|^{p-2}\,u$ have been studied for any $p>1$ in various cases including the one-dimensional case and periodic (but sign-changing) solutions, for instance in~\cite{MR965762,MR1151712}, most of the efforts have been devoted to the case $1<p<q$ and $\lambda>0$ and do not cover the case of Theorem~\ref{Thm:Main3}, which is in the class of $-\Delta_pu+|u|^{p-2}\,u=|u|^{q-2}\,u$ with $2<p<q$: in simpler words, positions of the terms $|u|^{p-2}\,u$ and $|u|^{q-2}\,u$ are exchanged. On the basis of numerical computations, it seems that properties of the solutions of~\eqref{ELS2} in the case $p\in(1,2)$ differ from the case $p>2$ as far as monotonicity properties of $E\mapsto T(E)$ are concerned and we will not study this issue here.

This paper is organized as follows. Section~\ref{section2} is devoted to the proofs of Theorems~\ref{Thm:Main1} and~\ref{Thm:Main2}. These results are classical in the limit case $p=2$ while we are not aware of such statements for $p>2$ in the existing literature. The result of Theorem~\ref{Thm:Main3} is by far more difficult. In Section~\ref{Section:Asymptotic} we start with problem~\eqref{potential} by making a change of variables and obtain an expression for the minimal period which goes along some of Chicone's ideas. We also prove some properties of the minimal period when the energy goes to zero and when it goes to the homoclinic level~$E_*$. In Section~\ref{section4} we establish a sufficient condition for the monotonicity of the minimal period which extends, in particular, the results of~\cite{BDL} for $p=2$ to the more general case of the one-dimensional $p$-Laplace operator with $p>2$. Our main result (Theorem~\ref{Thm:Main3}) is proved in Section~\ref{Section:application} using a new strategy of proof.

%%%%%%%%%%%%%%%%%%%%%%%%%%%%%%%%%%%%%%%%%%%%%%%%%%%%%%%%%%%%%%%%%%%%%%
%%%%%%%%%%%%%%%%%%%%%%%%%%%%%%%%%%%%%%%%%%%%%%%%%%%%%%%%%%%%%%%%%%%%%%
\section{A \texorpdfstring{$p$}p-Laplacian version of some classical results}\label{section2}

This section is devoted to the proof of Theorems~\ref{Thm:Main1} and~\ref{Thm:Main2}. We also provide a slightly more detailed statement of Theorem~\ref{Thm:Main1}.

We begin by extending~\cite[Theorem~2.1]{Chow} by Chow and Wang to the $p$-Laplacian case $p\ge2$. Equation~\eqref{potential} admits a first integral given by
\be{integral-0}
\tfrac1{p'}\,|w'|^p+\Pot(w)=E
\ee
for any energy level $E\in(0,E_*)$ and the minimal period is given in terms of the energy by
\be{periodp}
T(E)=\frac2{p'^{1/p}}\int_{w_1(E)}^{w_2(E)}\frac{dw}{\big(E-\Pot(w)\big)^{1/p}}
\ee
with $p'=p/(p-1)$, where $w_i(E)$, $i=1$, $2$, are two roots of $\Pot(w)=E$ such that
\[
0<w_1(E)<A<w_2(E)<B\quad\mbox{and}\quad \Pot(w)<E\quad\forall\,w\in\big(w_1(E),w_2(E)\big)\,.
\]
At this point, let us notice that the map $E\mapsto T(E)$ is a continuous function if we assume~\eqref{H2}, but that it is not the case if $\Pot$ admits another local minimum than $w=A$ in the interval $(A,B)$. See for instance Fig.~\ref{Fig1}. Let us define\[
\gamma(w,E):=p'\,\big(E-\Pot(w)\big)\,,\quad \mathcal R(w):=\Pot'(w)^2-p'\,\Pot(w)\,\Pot''(w)
\]
and notice that
\[
\frac{\partial\gamma}{\partial w}=-\,p'\,\Pot'(w)\quad\mbox{and} \quad \frac{\partial\gamma}{\partial E}=p'\,.
\]
The following result is a detailed version of Theorem~\ref{Thm:Main1}.
%---------------------------------------------------------------------
\begin{theorem}\label{Tchow} Let $p\ge2$ and consider Equation~\eqref{potential} where we assume that $\Pot$ satisfies~\eqref{H1}. With the above notations, for any $E\in(0,E_*)$, it holds that
\be{result1}
\frac{dT}{dE}(E)=\frac2{p'\,E}\int_{w_1(E)}^{w_2(E)}\frac{\mathcal R(w)}{\gamma(w,E)^{1/p}\,\Pot'(w)^2}\,dw
\ee
if the integral in the right hand side is finite. Thus if $\mathcal R$ is positive on $(0,A)\cup(A,B)$, then the minimal period is increasing.
\end{theorem}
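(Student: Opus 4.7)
\emph{Proposal.} The plan is to recast $T(E)$ as an Abel-type fractional integral in the potential variable and then differentiate. First, I split the orbit into its two monotone branches and parametrize $w$ by $V:=\Pot(w)$: define $w=h_-(V)$ on $[0,E]$, decreasing from $A$ to $w_1(E)$, and $w=h_+(V)$ on $[0,E]$, increasing from $A$ to $w_2(E)$. Since $dw=dV/\Pot'(h_\pm(V))$, the period formula~\eqref{periodp} becomes
\begin{equation*}
T(E)=\frac{2}{p'^{1/p}}\int_0^E (E-V)^{-1/p}\,F(V)\,dV,\qquad F(V):=\frac{1}{\Pot'(h_+(V))}-\frac{1}{\Pot'(h_-(V))},
\end{equation*}
the crucial feature being that $F$ does not depend on $E$; a Taylor expansion at $w=A$ (using $\Pot''(A)>0$) gives $F(V)\sim\sqrt{2/(\Pot''(A)\,V)}$ as $V\to0^+$, so the integrand is integrable at both endpoints.

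Second, the scaling $V=Es$ isolates the $E$-dependence as $T(E)=(2\,p'^{-1/p})\,E^{1/p'}\int_0^1(1-s)^{-1/p}\,F(Es)\,ds$. Differentiation under the integral is justified by dominated convergence on compact sub-intervals of $(0,E_*)$: indeed $s\,F'(Es)$ is $O(s^{-1/2})$ near $s=0$ with $E$-uniform constants and bounded near $s=1$, so $(1-s)^{-1/p}\,s\,F'(Es)$ is dominated by $s^{-1/2}(1-s)^{-1/p}$, integrable on $[0,1]$ for $p>1$. This produces
\begin{equation*}
\frac{dT}{dE}=\frac{T(E)}{p'\,E}+\frac{2}{p'^{1/p}\,E}\int_0^E (E-V)^{-1/p}\,V\,F'(V)\,dV.
\end{equation*}

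Third, I compute $F'(V)=-\,\Pot''(h_+)/\Pot'(h_+)^3+\Pot''(h_-)/\Pot'(h_-)^3$ via $h_\pm'(V)=1/\Pot'(h_\pm)$ and undo the substitution on each branch: the Jacobian $dV=\Pot'(w)\,dw$ cancels one power of $\Pot'$, and combining the two branches yields
\begin{equation*}
\int_0^E (E-V)^{-1/p}\,V\,F'(V)\,dV=-\,p'^{1/p}\int_{w_1}^{w_2}\frac{\Pot(w)\,\Pot''(w)}{\gamma(w,E)^{1/p}\,\Pot'(w)^2}\,dw.
\end{equation*}
Substituting this together with the identity $T(E)/(p'E)=(2/(p'E))\int\gamma^{-1/p}\,dw$, and collecting both terms over the common denominator $\gamma^{1/p}\,\Pot'^2$, produces formula~\eqref{result1} with numerator exactly $\mathcal R(w)=\Pot'(w)^2-p'\,\Pot(w)\,\Pot''(w)$. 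Under the hypothesis $\mathcal R>0$ on $(0,A)\cup(A,B)$ the integrand in~\eqref{result1} is pointwise positive there (the ratio $\mathcal R/\Pot'^2$ extends continuously to $w=A$ with limit $1-p'/2\ge 0$ for $p\ge 2$, and $\gamma^{-1/p}$ has integrable singularities at $w_i(E)$), so $dT/dE>0$ and $T$ is increasing.

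The main obstacle is the second step: naive Leibniz differentiation of~\eqref{periodp} yields divergent boundary terms at $w_i(E)$ together with a non-integrable pointwise $E$-derivative, and the two divergences cancel only after the Abel-type rewriting; the scaling $V=Es$ is precisely the device that turns the analysis into a clean dominated-convergence argument and makes the final conversion back to the $w$-variable a matter of bookkeeping.
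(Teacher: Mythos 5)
Your argument is correct, and all the computations check out (the change of variables on each monotone branch, the asymptotics $F(V)\sim\sqrt{2/(\Pot''(A)\,V)}$, the derivative formula after the scaling $V=Es$, and the reconversion to the $w$-variable that produces exactly $\mathcal R=\Pot'^2-p'\,\Pot\,\Pot''$ over $\gamma^{1/p}\,\Pot'^2$). However, your route is genuinely different from the paper's. The paper follows Chow and Wang: it introduces $I(E)=\int_{w_1}^{w_2}\gamma^{1/p'}\,dw$ and $J(E)=\int_{w_1}^{w_2}(\gamma-p'E)\,\gamma^{1/p'}\,dw$, observes $I'=\tfrac12\,T$, derives the algebraic relation $J''=(1-p')\,I'-p'\,E\,I''$, and independently rewrites $J$ by integrating by parts against $(\Pot/\Pot')'$ so that $J''$ becomes $-\,p'\int(\Pot'^2-\Pot\,\Pot'')\,\gamma^{-1/p}\,\Pot'^{-2}\,dw$; equating the two expressions gives \eqref{result1}. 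There the positive powers of $\gamma$ (which vanishes at $w_i(E)$) are the device that kills the boundary terms and keeps every differentiation under the integral sign harmless, whereas your device is the Abel-type kernel $(E-V)^{-1/p}$ with an $E$-independent density $F$ plus the scaling $V=Es$. Your approach is closer in spirit to the Chicone-style reparametrization the paper uses later (see \eqref{period:Chicone} and Section~\ref{section4}), and it makes more explicit why naive Leibniz differentiation of \eqref{periodp} fails; the paper's approach stays entirely in the $w$-variable and never needs to invert $\Pot$ on each branch. Note that both proofs share the same implicit requirement that $\Pot'$ not vanish on $(w_1(E),w_2(E))\setminus\{A\}$ (you need it to define $h_\pm$, the paper needs it to make sense of $\Pot/\Pot'$), which is consistent with the caveat in the statement that the right-hand side of \eqref{result1} be finite.
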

%---------------------------------------------------------------------
{}From Assumption~\eqref{H1}, we know that $\Pot(0)=E_*>0$ and $\Pot'(0)=0$. As a consequence we have $\lim_{w\to0_+}\Pot(w)\,|\Pot'(w)|^{-p'}=+\infty$. On the other hand, by definition of $\mathcal R$, we have
\[
\(\frac \Pot{|\Pot'|^{p'}}\)'=\frac{\mathcal R\,\Pot'}{|\Pot'|^{{p'}+2}}\,.
\]
This is incompatible with $\mathcal R$ being a negative valued function in a neighbourhood of $w=0_+$. Alternatively, if we remove the assumption that $\Pot'(0)=0$, then it can make sense to assume that~$\mathcal R$ is a negative function on $(0,A)\cup(A,B)$. In this latter case, the minimal period is decreasing.
\begin{proof}[of Theorem~\ref{Tchow}] The proof relies on the same strategy as for~\cite[Theorem~2.1]{Chow}. We skip some details and emphasize only the changes needed to cover the case $p>2$. Let us set
\[
I(E):=\int_{w_1(E)}^{w_2(E)}\,\gamma(w,E)^{1/p'}\,dw\quad\mbox{and}\quad J(E):=\int_{w_1(E)}^{w_2(E)}\big(\gamma(w,E)-p'\,E\big)\,\gamma(w,E)^{1/p'}\,dw\,.
\]
By differentiating with respect to $E$, we obtain
\[
\frac{dI}{dE}(E)=\int_{w_1(E)}^{w_2(E)}\frac{dw}{\gamma(w,E)^{1/p}}=\frac12\,T(E)\,dw\quad\mbox{and}\quad \frac{dJ}{dE}(E)=\int_{w_1(E)}^{w_2(E)}\frac{\gamma(w,E)-p' E}{\gamma(w,E)^{1/p}}\,dw\,,
\]
which implies that
\[
\frac{dJ}{dE}(E)=I(E) -p'\,E\,\frac{dI}{dE}(E)\,.
\]
Differentiating once more with respect to $E$, we get
\be{Jsecond}
\frac{d^2\!J}{dE^2}(E)=(1-p')\,\frac{dI}{dE}(E)-p'\,E\,\frac{d^2\!I}{dE^2}(E)\,.
\ee

On the other hand, by integrating by parts in
\[
\int_{w_1}^{w_2}\,\gamma^{1+1/p'}\,\frac{\Pot'^2-\Pot\,\Pot''}{\Pot'^2}\,dw=\int_{w_1}^{w_2}\,\gamma^{1+1/p'}\(\frac \Pot{\Pot'}\)'\,dw=-\,\frac{1+p'}{p'}\int_{w_1}^{w_2}\,\gamma^{1/p'}\,\frac \Pot{\Pot'}\,\frac{\partial\gamma}{\partial w}\,dw\,,
\]
we obtain
\[
J(E)=-\,\frac{p'}{p'+1}\int_{w_1(E)}^{w_2(E)}\,\gamma(w,E)^{1+1/p'}\,\frac{\Pot'(w)^2-\Pot(w)\,\Pot''(w)}{\Pot'(w)^2}\,dw
\]
by definition of $J$ and $\gamma$. See~\cite{Chow} for further details in the case $p=2$. By differentiating twice this expression of $J(E)$ with respect to $E$, we obtain
\[
\frac{d^2\!J}{dE^2}(E)=-\,p'\int_{w_1(E)}^{w_2(E)}\frac{\Pot'(w)^2-\Pot(w)\,\Pot''(w)}{\gamma(w,E)^{1/p}\,\Pot'(w)^2}\,dw\,.
\]
Since $T(E)=2\frac{dI}{dE}(E)$, we learn from~\eqref{Jsecond} that
\begin{align*}
\frac{p'\,E}2\,\frac{dT}{dE}(E)=&\,p'\,E\,\frac{d^2\!I}{dE^2}(E)=(1-p')\,\frac{dI}{dE}(E)-\frac{d^2\!J}{dE^2}(E)\\
=&\,(1-p')\int_{w_1(E)}^{w_2(E)}\frac{dw}{\gamma(w,E)^{1/p}}+p'\int_{w_1(E)}^{w_2(E)}\frac{\Pot'(w)^2-\Pot(w)\,\Pot''(w)}{\gamma(w,E)^{1/p}\,\Pot'(w)^2}\,dw\\
=&\,\int_{w_1(E)}^{w_2(E)}\frac{\mathcal R(w)}{\gamma(w,E)^{1/p}\,\Pot'(w)^2}\,dw\,.
\end{align*}
This concludes the proof of~\eqref{result1}.
\end{proof}

\begin{proof}[of Theorem~\ref{Thm:Main2}]
Let us consider again Equation~\eqref{potential} with a potential $\Pot$ which satisfies~\eqref{H1}. We adapt the proof of~\cite[Theorem~A]{CH} to the case $p>2$. Let us consider the function 
\be{Eqn:H}
\mathsf H(w):=\frac{w-A}{|w-A|}\,\sqrt{\Pot(w)}
\ee
for any $w\in(0,A)\cup(A,B)$ and extend it by $\mathsf H(A)=0$ at $w=A$. With the notations of~\eqref{periodp}, we have $\mathsf H\big(w_1(E)\big)=-\,\sqrt E$, $\mathsf H\big(w_2(E)\big)=+\,\sqrt E$ and we can reparametrize the interval $\big(w_1(E),w_2(E)\big)$ with some $\theta\in(-\pi/2,\pi/2)$ such that
\[
\sqrt E\,\sin\theta=\mathsf H(w)\,.
\]
With this change of variables, the minimal period can be written as
\be{period:Chicone}
T(E)=2\,\frac{E^{\frac12-\frac1p}}{(p')^\frac1p}\int_{-\frac\pi2}^\frac\pi2\frac{(\cos\theta)^{1-2/p}}{\big(\mathsf H'\circ\mathsf H^{-1}\big)\big(\sqrt E\,\sin\theta\big)}\,d\theta\,.
\ee
Its derivative with respect to $E$ is given by
\[
\frac{dT}{dE}(E)=\big(\tfrac12-\tfrac1p\big)\,\frac{T(E)}E-(p'\,E)^{-\frac1p}\int_{-\frac\pi2}^\frac\pi2\frac{\mathsf H''(w)}{\mathsf H'(w)^3}\,(\cos\theta)^{1-2/p}\,\sin\theta\,d\theta
\]
where we use the short-hand notation $w=w(\theta)=\mathsf H^{-1}\big(\sqrt E\,\sin\theta\big)$. After an integration by parts and using $w'(\theta)=\sqrt E\,\cos\theta/\mathsf H'(w)$, this expression becomes
\[
\frac{dT}{dE}(E)=\big(\tfrac12-\tfrac1p\big)\,\frac{T(E)}E+\,\tfrac12\,(p')^\frac1{p'}\,E^{\frac12-\frac1p}\int_{-\frac\pi2}^\frac\pi2\frac{3\,\mathsf H''(w)^2-\mathsf H'(w)\,\mathsf H'''(w)}{\mathsf H'(w)^5}\,(\cos\theta)^{3-2/p}\,d\theta\,.
\]
At this point, we notice that
\[
3\,(\mathsf H'')^2-\mathsf H'\,\mathsf H'''=\frac{|\Pot'|^4}{8\,\Pot^2}\(\frac \Pot{|\Pot'|^2}\)''
\]
is positive if and only if $\Pot/(\Pot')^2$ is a convex function. This completes the proof of Theorem~\ref{Thm:Main2}.
\end{proof}

%%%%%%%%%%%%%%%%%%%%%%%%%%%%%%%%%%%%%%%%%%%%%%%%%%%%%%%%%%%%%%%%%%%%%%
%%%%%%%%%%%%%%%%%%%%%%%%%%%%%%%%%%%%%%%%%%%%%%%%%%%%%%%%%%%%%%%%%%%%%%
\section{Further properties of the minimal period}\label{Section:Properties}

Let us collect some properties of $T(E)$ for a potential $\Pot$ satisfying~\eqref{H1} and eventually~\eqref{H2}, which are not given in Theorems~\ref{Thm:Main1} and~\ref{Thm:Main2}.

%%%%%%%%%%%%%%%%%%%%%%%%%%%%%%%%%%%%%%%%%%%%%%%%%%%%%%%%%%%%%%%%%%%%%%
\subsection{Asymptotic results}\label{Section:Asymptotic}

As in Section~\ref{section2}, recall that~\eqref{potential} has a first integral given by~\eqref{integral-0}, where $E\ge0$ is the energy level. In this short section, we shall assume that~\eqref{H1} holds, define
\be{omega}
\omega:=\sqrt{\Pot''(A)}>0
\ee
and make the additional hypothesis
\be{H3}\tag{H3}
\liminf_{w\to0_+}\frac{|\Pot'(w)|}{w^{p-1}}>0\,.
\ee
This assumption is satisfied in case of~\eqref{initpot} if $q>p>2$. In this case, we have $\omega=\sqrt{\Pot''(1)}=\sqrt{q-p}$, but the following result holds for a much larger class of potentials.
%---------------------------------------------------------------------
\begin{lemma}\label{asymp} Let $p>1$. If $\Pot$ is a potential such that~\eqref{H1} holds, then we have
\[
T(E)\sim\frac{2\,\sqrt{2\,\pi}\,\Gamma\big(1-\frac1p\big)}{(p')^\frac1p\,\omega\,\Gamma\big(\frac32-\frac1p\big)}\,E^{\frac12-\frac1p}\quad\mbox{as}\quad E\to0_+
\]
with $\omega$ defined by~\eqref{omega}. As a consequence, we obtain
\begin{align*}
\lim_{E\to0_+}T(E)=&\,0&\mbox{if}\quad p>2\,,\\[-8pt]
\lim_{E\to0_+}T(E)=&\,\frac{2\,\pi}\omega&\mbox{if}\quad p=2\,,\\
\lim_{E\to0_+}T(E)=&\,+\infty&\mbox{if}\quad p\in(1,2)\,.
\end{align*}
Additionally, if~\eqref{H3} holds, then for any $p>1$ we have $\displaystyle\lim_{E\to(E_*)_-}T(E)=+\infty$.
\end{lemma}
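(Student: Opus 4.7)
My plan for Lemma~\ref{asymp} splits into two parts, one for each asymptotic regime.

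For the small-energy asymptotic, the idea is that as $E\to 0_+$ the periodic orbit concentrates near the center $(A,0)$, where $\Pot$ is essentially quadratic. First I would Taylor-expand $\Pot(w) = \tfrac{\omega^2}{2}(w-A)^2 + o((w-A)^2)$, from which the turning points satisfy $w_{1,2}(E) = A \mp \sqrt{2E}/\omega + o(\sqrt{E})$. In~\eqref{periodp} I would then perform the rescaling $w = A + (\sqrt{2E}/\omega)\,s$ to obtain
\[
T(E) = \frac{2\sqrt{2E}}{(p')^{1/p}\,\omega}\int_{s_1(E)}^{s_2(E)} \Bigl(E - \Pot\bigl(A+\tfrac{\sqrt{2E}}{\omega}s\bigr)\Bigr)^{-1/p}\,ds,
\]
with $s_1(E)\to -1$ and $s_2(E)\to +1$. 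Passage to the limit is then by Lebesgue dominated convergence: the integrand converges pointwise to $E^{-1/p}(1-s^2)^{-1/p}$ on $(-1,1)$ and, thanks to the $C^2$ control of $\Pot$ near $A$, admits an integrable dominating function proportional to $(1-s^2)^{-1/p}$ (integrable for any $p>1$). This gives
\[
T(E) \sim \frac{2\sqrt{2}\,E^{1/2-1/p}}{(p')^{1/p}\,\omega} \int_{-1}^1 (1-s^2)^{-1/p}\,ds.
\]
The substitution $s=\sin\theta$ rewrites the remaining integral as $2\!\int_0^{\pi/2}(\cos\theta)^{1-2/p}\,d\theta$, which by the standard Beta-function identity equals $\sqrt{\pi}\,\Gamma(1-1/p)/\Gamma(3/2-1/p)$. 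Putting everything together produces the stated equivalent. The three case-by-case limits follow from the sign of $\tfrac12-\tfrac1p$; for $p=2$ a direct check with $p'=2$, $\Gamma(1/2)=\sqrt{\pi}$, $\Gamma(1)=1$ yields the value $2\pi/\omega$.

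For the divergence as $E\to E_*^-$, the idea is that $w_1(E)\to 0$ and the orbit spends unbounded time near the degenerate equilibrium at $w=0$. Hypothesis~\eqref{H2} supplies $c_1,\delta>0$ with $|\Pot'(w)|\ge c_1 w^{p-1}$ on $(0,\delta)$; integrating gives the order-sharp bound $E_*-\Pot(w)\ge \tfrac{c_1}{p}w^p$ on $(0,\delta)$, which in particular forces $w_1(E)\to 0$. My plan is to isolate the near-zero piece,
\[
T(E) \ge \frac{2}{(p')^{1/p}}\int_{w_1(E)}^{\delta}\bigl(E-\Pot(w)\bigr)^{-1/p}\,dw,
\]
and rescale by $w=(p\varepsilon/c_1)^{1/p}u$ with $\varepsilon:=E_*-E$, turning the integration range into $[u_1(\varepsilon),\,\delta(c_1/(p\varepsilon))^{1/p}]$ with $u_1(\varepsilon)$ bounded and the right endpoint tending to $+\infty$. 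Using the $w^p$ scaling supplied by~\eqref{H2} (as in the motivating example~\eqref{initpot}), the rescaled integrand is $\sim u^{-1}$ at infinity, producing the logarithmic blow-up $T(E)\gtrsim\log(1/\varepsilon)\to+\infty$. Equivalently, one may apply Fatou's lemma, reducing the divergence of $T(E)$ to the divergence of the separatrix integral $\int_0^\delta(E_*-\Pot(w))^{-1/p}\,dw$ driven by the same $w^p$ behavior.

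The main obstacle is the $E\to E_*^-$ step. The $C^2$ smoothness of $\Pot$ by itself is not sufficient, because the equilibrium at $w=0$ is genuinely degenerate for the $p$-Laplacian Hamiltonian system underlying~\eqref{potential}; it is precisely~\eqref{H2} that calibrates $E_*-\Pot(w)$ to order $w^p$ near the origin and turns the near-saddle integrand into a logarithmically non-integrable one, forcing $T(E)\to+\infty$. The other steps are routine Taylor-expansion and dominated-convergence calculations.
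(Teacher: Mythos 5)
Your treatment of the $E\to0_+$ asymptotics is correct and is essentially the paper's own argument: the quadratic expansion of $\Pot$ at $A$, the rescaling $w=A+\sqrt{2E}\,s/\omega$ in \eqref{periodp}, the limit integral $\int_{-1}^1(1-s^2)^{-1/p}\,ds$ and its Beta-function evaluation are exactly what the authors do; you merely add an explicit dominated-convergence justification that they leave implicit. No issue there.

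The $E\to(E_*)_-$ step, however, has a genuine gap, and it lies in the direction of an inequality. From \eqref{H2} you correctly deduce the \emph{lower} bound $E_*-\Pot(w)\ge\tfrac{c_1}{p}w^p$ near $w=0$. But writing $E-\Pot(w)=(E_*-\Pot(w))-\varepsilon$ with $\varepsilon=E_*-E$, a lower bound on $E_*-\Pot$ produces an \emph{upper} bound on the integrand $(E-\Pot(w))^{-1/p}$ of \eqref{periodp}; after your rescaling it yields $\int_{w_1(E)}^{\delta}(E-\Pot(w))^{-1/p}dw\le C\log(1/\varepsilon)$, which controls the period from above and proves nothing about divergence. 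The logarithmic blow-up you want requires the \emph{reverse} estimate $E_*-\Pot(w)\le C\,w^p$, so that the integrand is bounded below by $c/w$ on $(w_1(E),\delta)$ while $w_1(E)\to0$. Your phrases ``order-sharp'' and ``the $w^p$ scaling supplied by (H2)'' smuggle in this two-sided calibration, but \eqref{H2} only gives the one-sided bound, and for $p>2$ the matching upper bound genuinely fails in general: a $C^2$ potential with a nondegenerate maximum at $w=0$, i.e.\ $E_*-\Pot(w)\sim c\,w^2$, satisfies \eqref{H1} and \eqref{H2} for every $p>2$ (the liminf is $+\infty$), yet then $E-\Pot(w)\ge c'(w^2-w_1(E)^2)$ near the origin and $\int_{w_1}^{\delta}(w^2-w_1^2)^{-1/p}dw$ stays bounded as $w_1\to0$ because $w^{-2/p}$ is integrable at $0$ when $p>2$. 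The same example defeats your Fatou variant, since the separatrix integral $\int_0^\delta(E_*-\Pot(w))^{-1/p}dw$ is then finite.

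To be fair, the paper's own proof of this part consists of the very same one-line deduction from the very same one-sided inequality, so you have reproduced its weak point rather than introduced a new one. The conclusion is safe for the potential \eqref{initpot}, where $E_*-\Pot(w)=\tfrac1p w^p-\tfrac1q w^q$ does satisfy the two-sided bound $E_*-\Pot(w)\le\tfrac1p w^p$ near the origin, which is all that Theorem~\ref{Thm:Main3} needs. But a complete proof of the last claim of the lemma must assume or verify the upper bound $E_*-\Pot(w)\le C\,w^p$ (for instance $\limsup_{w\to0_+}|\Pot'(w)|/w^{p-1}<\infty$) rather than the lower bound; as it stands, this step of your argument does not go through.
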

%---------------------------------------------------------------------
\begin{proof} In a neighbourhood of $w=A$, we can write $\Pot(w)\sim\frac12\,\omega^2\,(w-A)^2$, use~\eqref{periodp} and the change of variables $w=A+\sqrt{2\,E}\,y/\omega$ to obtain
\[
T(E)\sim\frac{2\,\sqrt2}{p'^{1/p}\,\omega}\,E^{\frac12-\frac1p}\int_{-1}^1\frac{dy}{\big(1-y^2\big)^{1/p}}\quad\mbox{as}\quad E\to0_+\,.
\]
We obtain the expression of the integral using the formulae~\cite[6.2.1 \& 6.2.2]{zbMATH03863589} for the Euler Beta function.

Now let us consider the limit as $E\to(E_*)_-$. We learn from~\eqref{H3} that
\[
E_*-\Pot(w)\ge\frac\ell p\,w^p
\]
for some $\ell>0$ if $w>0$ is taken small enough. We deduce from~\eqref{periodp} that $T(E)$ diverges as $E\to(E_*)_-$.
\end{proof}

%%%%%%%%%%%%%%%%%%%%%%%%%%%%%%%%%%%%%%%%%%%%%%%%%%%%%%%%%%%%%%%%%%%%%%
\subsection{The monotonicity of the minimal period}\label{section4}

In this section, we develop a new sufficient condition for the monotonicity of the period to hold (see Proposition~\ref{Prop:K} below). This condition will be used in Section~\ref{Section:application} to prove Theorem~\ref{Thm:Main3}. Let us assume that the potential is defined by~\eqref{initpot}. Applying the same strategy of computation as in the proofs of Section~\ref{section2} involves very complicated expressions. For that reason, it is convenient to introduce a new change of variables as follows. Let us define
\be{D1}\tag{D1}
h(y):=\frac{y-A}{|y-A|}\,\sqrt{\Pot(y^{1/p})}\quad\forall\,y\in\big(0,A^p\big)\cup\big(A^p,B^p\big)
\ee
and extend it by $h\big(A^p\big)=0$ at $y=A^p$. Under Assumption~\eqref{H2}, $y_i(E)$, $i=1$, $2$, are the two roots in $(0,B)$ of $\Pot(y^{1/p})=E$. As in Theorem~\ref{Tchow}, $\Pot(y^{1/p})=E$ admits no other root in $(0,B)$ for any $E\in(0,E_*)$ and the map $E\mapsto T(E)$ is continuous. Assumption~\eqref{H1} implies the monotonicity of $\Pot$ neither on $(0,A)$ nor on $(A,B)$, while this monotonicity is granted under Assumption~\eqref{H2}. See Fig.~\ref{Fig1}. Also notice that
\[
 h'(y)>0\quad\forall\,y\in\big(y_1(E),A^p\big)\cup\big(A^p,y_2(E)\big)\,.
\]

We introduce the \emph{change of variables} $y\mapsto\theta\in(-\pi/2,\pi/2)$ such that
\be{ChangeOfVariables}
\sqrt E\,\sin\theta=h(y)\,.
\ee
Let us consider the function $h(y):=\mathsf H\big(w(y)\big)$ with $\mathsf H$ defined by~\eqref{Eqn:H}, where $w(y):=y^{1/p}-A$, so that $w'(y)=\frac1p\,y^{-1/p'}$. As in the proof of Theorem~\ref{Thm:Main2}, we use~\eqref{period:Chicone} to write
\be{period5}
T(E)=\mathsf c_p\,E^{\frac12-\frac1p}\int_{-\frac\pi2}^\frac\pi2\frac{(\cos\theta)^{1-2/p}}{y^{1/p'}\,h'(y)}\,d\theta\quad\mbox{with}\quad\mathsf c_p:=\frac2{{p\,(p')}^{1/p}}
\ee
with $y=y(E,\theta)$ defined by~\eqref{ChangeOfVariables}. Let us define
\[
J(E):=\int_{-\frac\pi2}^\frac\pi2\frac{(\cos\theta)^{1-2/p}}{y^{1/p'}\,h'(y)}\,d\theta
\]
and emphasize that $J$ is a function of $E$ as a consequence of the change of variables~\eqref{ChangeOfVariables}: $y=y(E,\theta)$ satisfies
\[
\frac{\partial y}{\partial E}=\frac{\sin\theta}{2\,\sqrt E\,h'(y)}\,.
\]
By differentiating $T(E)$ in~\eqref{period5} with respect to $E$ and using the relation~\eqref{ChangeOfVariables}, we find that
\[
\frac{T'(E)}{T(E)}=\frac{p-2}{2\,p}\,\frac1E+\frac{J'(E)}{J(E)}
\]
where
\[
J'(E)=-\,\frac1{2\,\sqrt E}\int_{-\frac\pi2}^\frac\pi2\,K(y)\,(\cos\theta)^{1-2/p}\,\sin\theta\,d\theta
\]
and
\be{fK}
K(y):=-\,\frac1{h'(y)}\,\frac d{dy}\(\frac1{y^{1/p'}\,h'(y)}\)=
\frac{y^2\,h''(y)+\frac1{p'}\,y\,h'(y)}{y^{2+1/p'}\,\big(h'(y)\big)^3}\,.
\ee
For $p>2$, $E\mapsto T(E)$ is increasing if $J'(E)>0$. Here is a sufficient condition on $h$, which is in fact an assumption on~$\Pot$.
%---------------------------------------------------------------------
\begin{proposition}\label{Prop:K} Assume that~\eqref{H1} and~\eqref{H2} hold. With the above notations, if the function~$K$ is decreasing on $[0,B^p]$, then $J'>0$ on $(0,E_*)$ and the minimal period $T(E)$ is a monotonically increasing function of $E$.
\end{proposition}
%---------------------------------------------------------------------
\begin{proof} With $y(E,\theta)$ defined by~\eqref{ChangeOfVariables}, the result is a consequence of
\[
J'(E)=-\,\frac1{2\,\sqrt E}\int_0^\frac\pi2\,\Big(K\big(y(E,\theta)\big)-K\big(y(E,-\,\theta)\big)\Big)\,(\cos\theta)^{1-2/p}\,\sin\theta\,d\theta
\]
and $y(E,-\,\theta)<y(E,\theta)$ if $\theta\in(0,\pi/2)$.
\end{proof}

In the next result, we give a sufficient condition on $h$ so that the monotonicity assumption on $K$ in Proposition~\ref{Prop:K} is satisfied and hence its conclusion holds.
%---------------------------------------------------------------------
\begin{corollary}\label{maincor} Assume that~\eqref{H1} and~\eqref{H2} hold. If $h$ and and $1/h'^2$ are convex functions, then the minimal period $T(E)$ is a monotonically increasing function of $E\in(0,E_*)$.
\end{corollary}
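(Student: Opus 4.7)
The plan is to verify the hypothesis of Lemma~\ref{Lem:K}, namely that $K$ is (strictly) decreasing on the range of $y$ visited by the periodic orbit, and then to invoke the formula $T'(E)/T(E)=(p-2)/(2p\,E)+J'(E)/J(E)$ established just before that lemma together with $p>2$ to conclude $T'(E)>0$. So the whole task reduces to proving that the function $K(y)$ defined in~\eqref{fK} is decreasing as soon as both $h$ and $1/h'^2$ are convex.

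To do so, I would separate the numerator of~\eqref{fK} into its two terms and write
\[
K(y)=y^{-1/p'}\,\frac{h''(y)}{h'(y)^3}+\frac1{p'}\,y^{-1-1/p'}\,\frac1{h'(y)^2}\,,
\]
a sum of two products each of which I want to show is nonnegative and nonincreasing. For the second summand, this is immediate: $y^{-1-1/p'}$ is strictly decreasing and positive, and convexity of $h$ makes $h'$ nondecreasing so that $1/h'^2$ is nonincreasing and positive; the product is therefore strictly decreasing. For the first summand, the factor $h''/h'^3$ equals $-\tfrac12\,(1/h'^2)'$, so it is nonincreasing precisely because $1/h'^2$ is convex; convexity of $h$ yields $h''\ge0$ and hence $h''/h'^3\ge0$; multiplied by the positive strictly decreasing prefactor $y^{-1/p'}$, this summand is also nonnegative and nonincreasing. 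Adding the two, $K$ is strictly decreasing on $[y_1(E),y_2(E)]$ for every $E\in(0,E_*)$, which is exactly the hypothesis of Lemma~\ref{Lem:K}.

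I do not anticipate any serious obstacle: the argument is essentially a sign chase once the right decomposition is chosen. The only delicate point is that both convexity hypotheses play genuinely distinct roles, and each is used at a precise spot. Convexity of $1/h'^2$ drives the monotonicity of $h''/h'^3$, while convexity of $h$ is used twice: to guarantee the sign $h''\ge0$ (so that the first summand does not reverse direction when multiplied by the decreasing positive factor $y^{-1/p'}$), and to make $1/h'^2$ itself nonincreasing in the treatment of the second summand. Dropping either hypothesis would break the chain of sign comparisons at one of these two places, which is what one should expect given that the conclusion is already a fairly tight sufficient condition.
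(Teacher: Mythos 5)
Your proposal is correct and follows essentially the same route as the paper: the identical decomposition of $K$ into the two summands $y^{-1/p'}\,h''/h'^3$ and $\tfrac1{p'}\,y^{-1-1/p'}\,(h')^{-2}$, with convexity of $1/h'^2$ giving that $h''/h'^3=-\tfrac12\big((h')^{-2}\big)'$ is nonincreasing and convexity of $h$ supplying the signs, so that $K$ is a product-and-sum of positive decreasing factors and Lemma~\ref{Lem:K} applies. Your remark on the distinct roles of the two convexity hypotheses is a slightly more explicit account of what the paper compresses into ``all the factors in the right hand side are positive decreasing functions.''
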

%---------------------------------------------------------------------
\begin{proof}
By convexity of $1/h'^2$, we have that
\[
0<\frac12\frac{d^2}{dy^2}\(\frac1{h'^2}\)=-\,\frac d{dy}\(\frac{h''}{h'^3}\)
\]
and $\frac{h''}{h'^3}$ is a decreasing function. Next, by~\eqref{fK}, we have
\[
K(y)=\frac1{y^{1/p'}}\,\frac{h''(y)}{\big(h'(y)\big)^3}+\frac1{p'}\,\frac1{y^{2-1/p}}\,\frac1{\big(h'(y)\big)^2}\quad\mbox{if}\quad y>0
\]
and observe that all the factors in the right hand side are positive decreasing functions, implying that $K$ is a decreasing function on $[A,B]$.
\end{proof}

%%%%%%%%%%%%%%%%%%%%%%%%%%%%%%%%%%%%%%%%%%%%%%%%%%%%%%%%%%%%%%%%%%%%%%
%%%%%%%%%%%%%%%%%%%%%%%%%%%%%%%%%%%%%%%%%%%%%%%%%%%%%%%%%%%%%%%%%%%%%%
\section{Proof of the main result}\label{Section:application}

In this section, we prove Theorem~\ref{Thm:Main3}, corresponding to the case of a potential $\Pot$ specifically given by~\eqref{initpot}, which satisfies Assumptions~\eqref{H1} and~\eqref{H2}, but is not covered by the other assumptions in Theorems~\ref{Thm:Main1} and~\ref{Thm:Main2}. For the sake of clarity, this is done by considering separately the cases $q=2\,p$, $q>2\,p$ and $p<q<2\,p$.

%%%%%%%%%%%%%%%%%%%%%%%%%%%%%%%%%%%%%%%%%%%%%%%%%%%%%%%%%%%%%%%%%%%%%%
\subsection{Notations and strategy}\label{Section:notations}

It is convenient to define
\[
W(y):=y^m-m\,y+m-1\quad\forall\,y\in[0,\gamma_m]
\]
and
\be{D2}\tag{D2}
h(y):=\frac{y-1}{|y-1|}\,\sqrt{W(y)/q}\quad\forall\,y\in[0,\gamma_m]
\ee
where
\[
m:=\frac qp\quad\mbox{and}\quad\gamma_m:=m^\frac1{m-1}\,. 
\]
The link with the notations of Section~\ref{Sec:Intro} and the framework of Theorem~\ref{Thm:Main3} goes as follows: if $\Pot$ is defined by~\eqref{initpot}, then $W(y)=q\,\Pot\big(y^{1/p}\big)$, $q\,E_*=m-1$, $\gamma_m=B=(q/p)^{p/(q-p)}$ and $A=1$. Notice that Definitions~\eqref{D1} and~\eqref{D2} coincide. With these notations, we have
\[
0=W(1)<W(y)<W(0)=W(\gamma_m)=m-1\quad\forall\,y\in(0,1)\cup(1,\gamma_m)\,.
\]
The \emph{change of variables} $y\mapsto\theta\in(-\pi/2,\pi/2)$ defined by~\eqref{ChangeOfVariables} amounts to
\[
\sqrt E\,\sin\theta=h(y)\,.
\]
We consider the cases $m=2$, $m>2$ and $1<m<2$, respectively, in Sections~\ref{5.2},~\ref{5.3}, and~\ref{5.4}, corresponding to $q=2\,p$, $q>2\,p$ and $p<q<2\,p$.

%%%%%%%%%%%%%%%%%%%%%%%%%%%%%%%%%%%%%%%%%%%%%%%%%%%%%%%%%%%%%%%%%%%%%%
\subsection{The case \texorpdfstring{$m=2$}{m=2}}\label{5.2}

As a special case, note that $W(y)=(y-1)^2$ and $h(y)=(y-1)/\sqrt q$ if $m=2$. In that case, the result of Theorem~\ref{Thm:Main3} is straightforward.
%---------------------------------------------------------------------
\begin{lemma}\label{Lem:m=2} If $m=2$, the minimal period $T(E)$ is a monotonically increasing function of~$E\in(0,E_*)$ with $E_*=\frac1{2\,p}$.\end{lemma}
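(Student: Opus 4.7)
The plan is to exploit the fact that when $m=2$ the function $h$ collapses to a linear function, so the entire machinery of Section~\ref{section4} applies in its simplest possible form. First, I would substitute $m=2$ into the definition of $W$ to obtain $W(y)=y^2-2y+1=(y-1)^2$, giving $\sqrt{W(y)/q}=|y-1|/\sqrt q$. Combined with the sign factor $(y-1)/|y-1|$ in the definition of $h$, this yields the linear function
\[
h(y)=\frac{y-1}{\sqrt q}\,,\qquad y\in[0,\gamma_2]=[0,2]\,,
\]
so that $h'(y)\equiv 1/\sqrt q$ is a positive constant and $h''(y)\equiv 0$.

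Next I would verify the hypotheses of Corollary~\ref{maincor}. Assumptions~\eqref{H1} and~\eqref{H3} are built into the setup of Theorem~\ref{Thm:Main3}, since the potential~\eqref{initpot} manifestly satisfies them. The function $h$, being linear, is convex; the function $1/(h')^2\equiv q$, being constant, is also convex. Hence Corollary~\ref{maincor} directly yields that $T(E)$ is strictly increasing on $(0,E_*)$. (Equivalently, one could invoke Lemma~\ref{Lem:K} and check via~\eqref{fK} that $K(y)=\tfrac{q}{p'}\,y^{-(1+1/p')}$ is decreasing, but Corollary~\ref{maincor} is more economical.)

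Finally, I would identify $E_*$ explicitly. By definition $E_*=\Pot(0)$, and from~\eqref{initpot} we read off $\Pot(0)=\tfrac1p-\tfrac1q$. Substituting $q=2p$ (i.e., $m=2$) gives $E_*=\tfrac1p-\tfrac1{2p}=\tfrac1{2p}$, as claimed.

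There is no real obstacle in this case: once $h''\equiv 0$ is observed, the convexity checks required by Corollary~\ref{maincor} are trivial, and the identification of $E_*$ is a one-line computation. All the genuine difficulty of Theorem~\ref{Thm:Main3} is deferred to the nonlinear regimes treated in Sections~\ref{5.3} and~\ref{5.4}.
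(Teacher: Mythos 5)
Your proof is correct and follows essentially the same route as the paper: both reduce the claim to the machinery of Section~\ref{section4}, the paper by computing $K$ explicitly and invoking Lemma~\ref{Lem:K}, and you by observing that $h$ is linear and invoking Corollary~\ref{maincor} (which is itself just a sufficient condition for Lemma~\ref{Lem:K}), while also recording the direct computation $K(y)=\tfrac q{p'}\,y^{-(1+1/p')}$ as an alternative. That explicit formula for $K$ agrees with what~\eqref{fK} gives and is decreasing, so the argument goes through; your identification $E_*=\tfrac1p-\tfrac1{2p}=\tfrac1{2p}$ is also correct.
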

%---------------------------------------------------------------------
\begin{proof} The function $K$ defined by~\eqref{fK} is explicitly given by $K(y)=\frac{q^2}{p'}\,y^{-1/p}$ hence monotonically decreasing and Proposition~\ref{Prop:K} applies.\end{proof}

%%%%%%%%%%%%%%%%%%%%%%%%%%%%%%%%%%%%%%%%%%%%%%%%%%%%%%%%%%%%%%%%%%%%%%
\subsection{The case \texorpdfstring{$m>2$}{m>2}}\label{5.3}

We start with the following result.
%---------------------------------------------------------------------
\begin{lemma}\label{Lem:m>2} If $h$ is given by~\eqref{D2} and $m>2$, then $h$ and $(h')^{-2}$ are convex.
\end{lemma}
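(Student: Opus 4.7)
The plan is to verify the two hypotheses of Corollary~\ref{maincor}, namely that $h$ and $1/(h')^2$ are convex on $[0,\gamma_m]$. Squaring the definition of $h$ gives $q\,h(y)^2=W(y)$, and two successive differentiations yield
\[
h''(y)=\frac{2\,W(y)\,W''(y)-W'(y)^2}{4q\,W(y)\,h(y)},\qquad\frac{1}{h'(y)^2}=\frac{4q\,W(y)}{W'(y)^2}.
\]
Both convexity statements thus reduce to properties of the explicit function $W(y)=y^m-m\,y+m-1$, whose derivatives $W'(y)=m(y^{m-1}-1)$, $W''(y)=m(m-1)y^{m-2}$, and $W'''(y)=m(m-1)(m-2)y^{m-3}$ are readily available.

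For the convexity of $h$, set $f(y):=2W(y)W''(y)-W'(y)^2$ and note that $f(1)=0$ since $W(1)=W'(1)=0$, while $f'(y)=2W(y)W'''(y)\ge0$ on $(0,\gamma_m)$ because $W\ge 0$ on this interval and $W'''>0$ on $(0,\infty)$ under the hypothesis $m>2$. Hence $f$ is non-decreasing, strictly so outside $y=1$, and therefore $f<0$ on $(0,1)$ and $f>0$ on $(1,\gamma_m)$. Combined with the facts that $h$ and $y-1$ have the same sign and that $W>0$ off $y=1$, the formula for $h''$ above yields $h''>0$ on $(0,\gamma_m)\setminus\{1\}$, and a Taylor expansion of $W$ at $y=1$ shows that $h''(1)>0$ as well.

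For the convexity of $1/(h')^2=4qW/(W')^2$, a direct computation of $\bigl(W/(W')^2\bigr)''$ reduces the condition to the pointwise inequality $3\,f\,W''-f'\,W'\ge0$ on $(0,\gamma_m)$. Substituting the explicit expressions for $W$ and its derivatives and factoring out the positive quantity $m^2(m-1)\,y^{m-3}$ transforms this into the assertion that
\[
R(y):=(m-2)y^{2m-1}-2(m-2)(2m-1)y^m+2(m-1)(2m-1)y^{m-1}-m(2m-1)y+2(m-1)(m-2)
\]
is non-negative on $[0,\gamma_m]$.

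The crux of the argument is the clean identity
\[
R''(y)=2(m-1)(m-2)(2m-1)\,y^{m-3}\,W(y),
\]
obtained by differentiating $R$ twice and regrouping (one gets $R''(y)=2(m-1)(m-2)(2m-1)\bigl[y^{2m-3}-m\,y^{m-2}+(m-1)y^{m-3}\bigr]=2(m-1)(m-2)(2m-1)y^{m-3}\bigl[y^m-my+(m-1)\bigr]$, and the last bracket is precisely $W(y)$). Since $W\ge0$ on $[0,\gamma_m]$ and the remaining factors are positive for $m>2$, this shows that $R$ is itself a convex function on $(0,\gamma_m]$, extended by continuity to $[0,\gamma_m]$. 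A direct calculation then gives $R(1)=R'(1)=0$, so $y=1$ is the global minimum of $R$ on $[0,\gamma_m]$ and hence $R\ge R(1)=0$ throughout; Corollary~\ref{maincor} then concludes the proof. The main challenge is to spot the factorization of $R''$ as a scalar multiple of $y^{m-3}\,W(y)$; without this identity one would be forced into a delicate interval-by-interval sign analysis of $R$, or a polynomial-division argument restricted to integer values of $m$.
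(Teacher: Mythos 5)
Your proof is correct. For the convexity of $(h')^{-2}$ you arrive at exactly the paper's argument: your polynomial $R$ is the paper's $G$ (same coefficients, reordered), and the decisive identity $R''(y)=2\,(m-1)\,(m-2)\,(2\,m-1)\,y^{m-3}\,W(y)$ together with $R(1)=R'(1)=0$ is precisely how the paper concludes; incidentally, the paper's displayed formula $G''=2\,(m-1)\,(m-2)\,(2\,m-1)\,W$ omits the factor $y^{m-3}$, so your version is the accurate one (the omission is harmless for the sign argument, since $y^{m-3}>0$). For the convexity of $h$, however, your route is genuinely different and cleaner: the paper expands $2\,W\,W''-(W')^2$ into the explicit expression $F(y)$ in $y$ and $z=y^{m-1}$ and verifies its sign by separate elementary estimates on $y\le1$ and on $y\ge1$, whereas you observe that $f:=2\,W\,W''-(W')^2$ satisfies $f(1)=0$ and $f'=2\,W\,W'''\ge0$ when $m>2$, so that $f$ changes sign at $y=1$ in the same direction as $h$ does. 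This monotonicity trick replaces the paper's two-case polynomial analysis by a one-line computation and makes the role of the hypothesis $m>2$ (through $W'''>0$) transparent; the only extra care needed is at the removable singularity $y=1$, which you correctly handle by a Taylor expansion showing $h''(1)>0$.
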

%---------------------------------------------------------------------
\begin{proof} Let $z=y^{m-1}$. For $0\le y\le\gamma_m$, we find that the expression
\[
4\,W^{3/2}\,\big(\sqrt W\,\big)''=2\,W\,W''-\big(W'\big)^2
\]
has the same sign as
\[
F(y):=-\,m^2+2\,m\,(m-1)^2\,y^{m-2}-2\,m^2\,(m-2)\,y^{m-1}+m\,(m-2)\,y^{2m-2}\,.
\]
$\bullet$ If $y\ge1$, then $y^{m-2}\ge1$, $-\,m^2+2\,m\,(m-1)^2\,y^{m-2}\ge m\,(m-2)\,(2\,m-1)$ and
\[
F(y)\ge m\,(m-2)\,(z-1)\,(z+1-2\,m)\ge0\,.
\]
$\bullet$ If $y\le1$, then $y^{m-2}\le1$, $y^{2m-2}\le1$, $m\,(m-2)\,y^{2m-2}-m^2\le-\,2\,m\,y^{2m-2}$ and
\[
-\,F(y)\ge2\,m\,(z-1)\(z+(m-1)^2\)\ge0\,.
\]
In both cases, we conclude using~\eqref{D2} that $h''$ has the same sign as $\frac{y-1}{|y-1|}\,F(y)$.

Using~\eqref{D2} again, we notice that the function $\big((h')^{-2}\big)''$ has the same sign as
\begin{multline*}
G(y):=2\,(m-1)\,(m-2)-m\,(2\,m-1)\,y\\
+2\,(m-1)\,(2\,m-1)\,y^{m-1}-2\,(m-2)\,(2\,m-1)\,y^m+(m-2)\,y^{2m-1}\,.
\end{multline*}
Since $G(1)=G'(1)=0$ and
\[
G''(y)=2\,(m-1)\,(m-2)\,(2\,m-1)\,W(y)\ge0\,,
\]
we conclude that $G\ge0$ and $\((h')^{-2}\)''\ge0$.
\end{proof}
Then, as a straightforward consequence of Lemmas~\ref{Prop:K} and~\ref{Lem:m>2}, we have
%---------------------------------------------------------------------
\begin{lemma}\label{Lem:m>22} If $m>2$, then the minimal period $T(E)$ is a monotonically increasing function of~$E\in(0,E_*)$.\end{lemma}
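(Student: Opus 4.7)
The plan is essentially to chain together the ingredients already assembled in Sections~\ref{section4} and~\ref{5.3}, since Lemma~\ref{Lem:m>22} is stated explicitly as a straightforward consequence of Lemmas~\ref{Lem:K} and~\ref{Lem:m>2}. My first step is to confirm that the hypotheses of Corollary~\ref{maincor} (which itself is the packaged consequence of Lemma~\ref{Lem:K}) are all in force for the potential~\eqref{initpot}. Assumption~\eqref{H1} is noted at the opening of Section~\ref{Section:application}, and \eqref{H3} follows by direct inspection of $\Pot'(w)=\phi_q(w)-\phi_p(w)=w^{p-1}\,(w^{q-p}-1)$, which changes sign precisely at $w=A=1$ on the interval $(0,B)=(0,\gamma_m^{1/p})$.

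The second step is to invoke Lemma~\ref{Lem:m>2}, which provides the two convexity properties we need, namely that $h$ and $(h')^{-2}$ are convex on $[0,\gamma_m]$ when $m>2$. These are exactly the hypotheses required by Corollary~\ref{maincor}.

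The third and final step is the application itself: by Corollary~\ref{maincor}, the monotonicity of $K$ given by~\eqref{fK} follows from the convexity of $1/h'^{\,2}$ (which makes $h''/h'^{\,3}$ decreasing) combined with the positivity and decreasing character of $y^{-1/p'}$ and $y^{-(2-1/p)}$, so that $K$ is a decreasing function on $[A,B]=[1,\gamma_m]$. Lemma~\ref{Lem:K} then yields $J'(E)>0$ on $(0,E_*)$ and hence $T'(E)>0$, which is the desired conclusion.

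There is no real obstacle here, since the analytic content has been absorbed into the polynomial sign computations of Lemma~\ref{Lem:m>2} (the functions $F$ and $G$ there being the vehicles for the convexity estimates). The only point that deserves verification during writing is that the orientation of the change of variables $y\mapsto\theta$ in~\eqref{ChangeOfVariables} correctly matches the assumption that $K$ is decreasing on $[A,B]$ rather than on the full interval $[0,B]$: this is precisely what Lemma~\ref{Lem:K} requires, and the proof of Lemma~\ref{Lem:K} already takes care of pairing $\theta$ with $-\theta$ to extract the correct sign.
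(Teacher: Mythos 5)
Your proposal is correct and follows exactly the paper's own route: the paper proves Lemma~\ref{Lem:m>22} in one line by combining Lemma~\ref{Lem:m>2} with Corollary~\ref{maincor}, and your write-up simply makes explicit the verification of~\eqref{H1} and~\eqref{H3} and the chain through Lemma~\ref{Lem:K} that the paper leaves implicit. No discrepancies to report.
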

%---------------------------------------------------------------------
\begin{proof} 
It is a consequence of Lemma~\ref{Lem:m>2} and Corollary~\ref{maincor}.
\end{proof}

%%%%%%%%%%%%%%%%%%%%%%%%%%%%%%%%%%%%%%%%%%%%%%%%%%%%%%%%%%%%%%%%%%%%%%
\subsection{The case \texorpdfstring{$1<m<2$}{1<m<2}}\label{5.4}

We cannot apply Corollary~\ref{maincor} and we have to rely directly on Proposition~\ref{Prop:K}. We recall that $m=q/p\in\R$. We shall say that the parameters $(a,m)\in\R^2$ are \emph{admissible} if and only if
\[
a=\frac1p\in\(0,\tfrac12\),\quad m=\frac qp\in(1,2)\,.
\]
With $K$ defined as in Section~\ref{section4} by~\eqref{fK}, let us start by computing $K'$. As we shall see below, the expression of $K'$ involves integer powers of $y$ and $y^{m-1}$, and can be written as a polynomial in $y$ and $z=y^{m-1}$ even if $m$ is not a rational number. This polynomial structure is useful in several computations.
%---------------------------------------------------------------------
\begin{lemma}\label{Lem:f} If $(a,m)$ are admissible, then $-\,K'(y)$ has the same sign as $p^2\,y^2\,f(a,m,y,z)$ where $z=y^{m-1}$ and
\begin{align*}
f(a,m,y,z)=&\;-\,3\,m\,y\,(z-1)^2\,(m\,z-1)\\
&\hspace*{12pt}+2\,(m-1-m\,y+y\,z)\(2+(1-6\,m+m^2)\,z+2\,m^2\,z^2\)\\
&\;+a\,\Big(3\,m\,y\,(z-1)^3-6\,(z-1)\,(m\,z-1)\,(m-1-m\,y+y\,z)\Big)\\
&\;+a^2\,\Big(2\,(z-1)^2\,(m-1-m\,y+y\,z)\Big)\,.
\end{align*}
\end{lemma}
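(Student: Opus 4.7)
My plan is a systematic algebraic expansion of $-K'(y)$, eliminating the function $h$ in favour of the polynomial $W(y) = y^m - m\,y + m-1$, and then reading off the polynomial $f$ from the resulting formula. First I would differentiate the identity $h(y)^2 = W(y)/q$ twice to obtain $h\,h' = W'/(2q)$ and $h'^2 + h\,h'' = W''/(2q)$, from which one eliminates $h$ to get the $h$-free identities
\begin{equation*}
h'(y)^2 = \frac{W'(y)^2}{4\,q\,W(y)}\,,\qquad \frac{h''(y)}{h'(y)} = \frac{2\,W\,W'' - (W')^2}{2\,W\,W'}\,.
\end{equation*}
Setting $a := 1/p$ so that $1/p' = 1-a$ and $2+1/p'=3-a$, and using a common denominator in~\eqref{fK}, these identities lead to the compact expression
\begin{equation*}
K(y) = \frac{2\,q\,N(y)}{y^{2-a}\,W'(y)^3}\,,\qquad N(y) := y\,\bigl(2\,W\,W'' - (W')^2\bigr) + 2\,(1-a)\,W\,W'\,.
\end{equation*}

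Writing $D(y) := y^{2-a}\,W'(y)^3$, the sign of $-K'$ equals the sign of $N\,D' - N'\,D$. A direct differentiation yields
\begin{align*}
D' &= y^{1-a}\,(W')^2\,\bigl[(2-a)\,W' + 3\,y\,W''\bigr]\,,\\
N' &= (1-2a)\,(W')^2 + 2\,(2-a)\,W\,W'' + 2\,y\,W\,W'''\,.
\end{align*}
Multiplying out, the two copies of $2\,(2-a)\,y\,W\,W'\,W''$ cancel, and after factoring $y^{1-a}\,(W')^2$ one reaches $N\,D' - N'\,D = y^{1-a}\,(W')^2\,\Phi(y)$ with
\begin{align*}
\Phi &= 6\,y^2\,W\,(W'')^2 - 3\,y^2\,(W')^2\,W'' - 2\,y^2\,W\,W'\,W''' \\
&\quad + 6\,(1-a)\,y\,W\,W'\,W'' - 3\,(1-a)\,y\,(W')^3 + 2\,(1-a)\,(2-a)\,W\,(W')^2\,.
\end{align*}

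The last step is to substitute the explicit derivatives $W = m-1-m\,y+y\,z$, $W' = m\,(z-1)$, $W'' = m\,(m-1)\,z/y$, $W''' = m\,(m-1)\,(m-2)\,z/y^2$ (with $z = y^{m-1}$) into $\Phi$; every term then carries a common factor $m^2$. Collecting by powers of $a$, the coefficient of $a^2$ is at once $2\,(z-1)^2\,(m-1-m\,y+y\,z)$; the coefficient of $a^1$ reduces to $3\,m\,y\,(z-1)^3 - 6\,(z-1)\,(m\,z-1)\,(m-1-m\,y+y\,z)$ via the grouping $-6\,(m-1)\,z\,(z-1)\,W - 6\,(z-1)^2\,W = -6\,(z-1)\,(m\,z-1)\,W$; and the $a^0$ coefficient splits into a $y$-part, which collapses by $-3\,m\,(m-1)\,y\,z\,(z-1)^2 - 3\,m\,y\,(z-1)^3 = -3\,m\,y\,(z-1)^2\,(m\,z-1)$, and a $W$-part whose direct expansion in $z$ yields precisely $2\,W\,\bigl[2 + (1-6m+m^2)\,z + 2\,m^2\,z^2\bigr]$. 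This proves $\Phi = m^2\,f(a,m,y,z)$ with the stated $f$, and since $y^{1-a}\,(W')^2 > 0$ for $y \in (0,\gamma_m)\setminus\{1\}$ while the remaining prefactor $2q/D^2$ is positive, $-K'(y)$ has the same sign as $f(a,m,y,z)$, hence also as $p^2\,y^2\,f(a,m,y,z)$ since $p^2\,y^2 > 0$.

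The main obstacle is the algebraic bookkeeping in the last paragraph: the six terms of $\Phi$ generate a dozen or so monomials in $y$ and $z$ whose coefficients are polynomials in $m$, and identifying the factored form stated for $f$ depends on the two regroupings exhibited above. The calculation is mechanical but error-prone by hand, and is most safely confirmed by symbolic computation.
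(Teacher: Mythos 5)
Your computation is correct: I checked the reduction $K=\frac{2\,q\,N}{y^{2-a}\,(W')^3}$ with $N=y\,(2\,W\,W''-(W')^2)+2\,(1-a)\,W\,W'$, the cancellation of the $2\,(2-a)\,y\,W\,W'\,W''$ terms, the combination $-(2-a)-(1-2a)=-3\,(1-a)$ giving the stated $\Phi$, and the final substitution does yield $\Phi=m^2\,f(a,m,y,z)$ with exactly the regroupings you indicate (the bracket in the $a^0$, $W$-part sums to $4\,m^2z^2+(2\,m^2-12\,m+2)\,z+4$). The paper reaches the same polynomial identity by a different intermediate decomposition: it reverts to the original variable $x=y^{1/p}$, sets $\Phi(x)=W(x^p)=x^{mp}-m\,x^p+m-1$, and observes that $K(y)=-\,2\,m\,p^3\,\frac{d}{dx}\big(\Phi/|\Phi'|^2\big)$, so that $-K'$ has the sign of $\frac{d^2}{dx^2}\big(\Phi/|\Phi'|^2\big)$, i.e.\ of the Chicone quantity $6\,\Phi\,|\Phi''|^2-2\,\Phi\,\Phi'\,\Phi'''-3\,|\Phi'|^2\,\Phi''$; this makes the link with the convexity condition of Theorem~\ref{Thm:Main2} transparent and keeps the second derivative in a recognizable form, at the cost of manipulating non-integer powers $x^p$, $x^{mp}$ before substituting $y=x^p$, $z=y^{m-1}$. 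Your version stays in the $y$-variable with the genuine polynomial $W(y)=y^m-m\,y+m-1$, which makes the final expansion more mechanical but loses the structural connection to Chicone's ratio. Both arguments share the same minor caveat, which neither proof dwells on: the quotient representation of $K$ degenerates at $y=1$ where $W'=0$, so the sign identification is valid on $(0,1)\cup(1,\gamma_m)$, which is all that is used later together with the Taylor expansion at $y=1$.
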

%---------------------------------------------------------------------
\begin{proof} We set $y=x^p$ so that $x=y^{1/p}$ and $\frac{dx}{dy}=\frac1p\,y^{-1/p'}$. Let
\[
\Phi(x):=W(y)=x^{mp}-m\,x^p+m-1\quad\forall\,x\in\big[0,\gamma_m^{1/p}\big]\,,
\]
where $W$ and $h$ are as in Section~\ref{Section:notations}. Using~\eqref{D2}, we obtain
\[
\big|\sqrt q\,h'(y)\big|^2=\left|\frac{\Phi'(x)}{2\,p\,y^{1/p'}\,\sqrt{\Phi(x)}}\right|^2_{|x=y^{1/p}}\,,
\]
that is, $4\,m\,p^3\,\big|y^{1/p'}\,h'(y)\big|^2=(\Phi'(x))^2/\Phi(x)$. Hence $K$ defined by~\eqref{fK} can be rewritten as
\[
K(y)=-\,\frac12\,y^{1/p'}\,\frac d{dy}\(\frac1{y^{2/p'}\,h'(y)^2}\)=-\,2\,m\,p^3\,\frac d{dx}\(\frac{\Phi(x)}{|\Phi'(x)|^2}\).
\]
A detailed but elementary computation of the derivatives shows that $-\,K'$ has the same sign~as
\[
\frac{x^4}{q^2}\,|\Phi'(x)|^4\,\frac{d^2}{dx^2}\(\frac{\Phi(x)}{|\Phi'(x)|^2}\)=\frac{x^4}{q^2}\(6\,\Phi\,|\Phi''|^2-2\,\Phi\,\Phi'\,\Phi'''-3\,|\Phi'|^2\,\Phi''\)=p^2\,y^2\,f(a,m,y,z)\,,
\]
ending the proof of the lemma.
\end{proof}
%---------------------------------------------------------------------
\begin{lemma}\label{Lem:K'}
With $\Pot$ given by~\eqref{initpot} and $2<p<q<2\,p$, $K$ defined by~\eqref{fK} is monotonically decreasing.
\end{lemma}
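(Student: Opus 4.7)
By Lemma \ref{Lem:f} it suffices to prove that $f(a,m,y,z) \ge 0$ for every admissible pair $(a,m) \in (0,\tfrac12)\times(1,2)$ and every $y \in [0,\gamma_m]$ with $z=y^{m-1}$. The plan is to exploit the convexity of $f$ as a polynomial in $a$, combined with the factorisation $m-1-my+yz = W(y)$, and then to reduce the positivity of $f$ to two polynomial inequalities, one at $a=0$ and one obtained by optimising in $a$.

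First I would rewrite $f = A_0(m,y,z) + a\,A_1(m,y,z) + a^2\,A_2(m,y,z)$ with
\[
A_2 = 2(z-1)^2\,W(y),\qquad A_1 = 3my(z-1)^3 - 6(z-1)(mz-1)\,W(y),
\]
\[
A_0 = -3my(z-1)^2(mz-1) + 2W(y)\bigl(2 + (1-6m+m^2)z + 2m^2z^2\bigr),
\]
using $m-1-my+yz = W(y)$ to organise every term. Since $W(y)\ge 0$ on $[0,\gamma_m]$, one has $A_2\ge 0$, so $a\mapsto f(a,m,y,z)$ is a convex quadratic on $\mathbb{R}$. Hence it is enough to show $A_0\ge 0$ and that the (possibly negative) discriminant $A_1^2 - 4A_0 A_2$ forces $f\ge 0$ everywhere, or equivalently that either $A_1\ge 0$, or $-A_1/(2A_2)\notin (0,\tfrac12)$, or the minimum value $A_0 - A_1^2/(4A_2)$ is non-negative on the relevant subdomain.

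The next step is to dispose of the linear-in-$a$ term $A_0\ge 0$. Using $W(y) = y^m - my + m-1$ and the identity $m-1-my+yz = W(y)$, I would split into $y\le 1/\gamma_m$ (where $mz\le 1$ and the first summand of $A_0$ is non-negative, while a direct check gives $2+(1-6m+m^2)z+2m^2z^2 > 0$ for $z\in[0,1]$ and $1<m<2$) and $y\in(1/\gamma_m,\gamma_m]$ (where $mz\ge 1$ and one must dominate $3my(z-1)^2(mz-1)$ by the $W(y)$-weighted term). The key algebraic ingredient is that $W(y)$ and $(z-1)^2$ both have a double zero at $y=1$ and $(z-1)\sim (m-1)(y-1)$, so a Taylor expansion around $y=1$ exhibits a factor $(y-1)^4$ in $A_0$; away from $y=1$, the assertion becomes a rational inequality in $y$ and $z$ that can be checked by reducing the ratio $y(z-1)^2(mz-1)/W(y)$ to a monotone function of $y\in(1/\gamma_m,\gamma_m]$ and comparing endpoints.

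Finally I would handle the full quadratic. Restrict to the interval $y\in(1/\gamma_m,\gamma_m]$ where $A_1$ can take either sign; on the complementary interval one checks $A_1\ge 0$ directly from the sign of $mz-1$ and the monotonicity of $(z-1)^3$, so convexity and $A_0\ge0$ already give $f\ge 0$. On the remaining range, the minimiser is $a^\star = -A_1/(2A_2)$; I would show either $a^\star\le 0$ (in which case $f\ge A_0$) or that the value $f(a^\star,\cdot) = A_0 - A_1^2/(4A_2)$ is non-negative, the latter reducing to $4A_0A_2 \ge A_1^2$, a polynomial inequality in $y$ and $z$ that, after clearing the common factor $(z-1)^2 W(y)$, becomes a tractable inequality to verify by the same $y=1$ factorisation trick together with an endpoint analysis at $y=\gamma_m$. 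The hard part is this last polynomial inequality: the mixture of fractional powers forced by $z=y^{m-1}$ prevents a purely algebraic factorisation, and the sharp control of the minimiser $a^\star$ against the admissible range $a\in(0,\tfrac12)$ seems to be the decisive technical point.
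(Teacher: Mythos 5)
Your reduction to the positivity of $f(a,m,y,y^{m-1})$ via Lemma~\ref{Lem:f} matches the paper, but from there your route (convexity of $f$ as a quadratic in $a$, using $m-1-m\,y+y\,z=W(y)$ so that $A_2=2\,(z-1)^2\,W(y)\ge0$) diverges from the paper's, which instead runs a continuation argument in $m$ from the explicit case $m=2$ combined with an elimination computation (Lemma~\ref{Lem:finalContradiction}) showing that $f=0$ and the directional derivative $y\,\partial_yf+(m-1)\,z\,\partial_zf=0$ cannot vanish simultaneously off $z=1$. Unfortunately your sketch has concrete gaps that prevent it from closing.

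First, a sign error: on $y\le1/\gamma_m$ you have $z\le1/m$, so $(z-1)^3<0$ and $(z-1)(m\,z-1)\ge0$, whence \emph{both} terms of $A_1=3\,m\,y\,(z-1)^3-6\,(z-1)\,(m\,z-1)\,W(y)$ are $\le0$; thus $A_1\le0$ there, not $A_1\ge0$, and the conclusion ``convexity and $A_0\ge0$ already give $f\ge0$'' does not follow on that interval (the vertex $a^\star=-A_1/(2A_2)$ is then $\ge0$ and may lie in or beyond $(0,\tfrac12)$). Second, your final dichotomy omits the case $a^\star\ge\tfrac12$, in which the infimum of $f$ over $a\in(0,\tfrac12)$ is $f(\tfrac12,\cdot)$, an inequality you never state or verify; and this omitted case is not marginal, it is the generic one near $y=1$. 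Indeed, the expansion~\eqref{f:lim} gives the fourth-order coefficient $\tfrac1{12}(m-1)^3c_{m,a}$ with $c_{m,a}=12\,m\,a\,(a-m-1)+m\,(2\,m^2+7\,m+2)$, a convex quadratic in $a$ minimized at $a=(m+1)/2>\tfrac12$ with minimum value $-\,m\,(m^2-m+1)<0$. Consequently $a^\star\to(m+1)/2$ as $y\to1$ and the unconstrained minimum $A_0-A_1^2/(4A_2)$ is \emph{negative} near $y=1$, i.e. $4A_0A_2\ge A_1^2$ fails there; the positivity of $f$ on $(0,\tfrac12)$ near $y=1$ holds only because $c_{m,1/2}>0$, so the constraint $a<\tfrac12$ must be used in an essential way. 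Finally, the inequalities you do rely on ($A_0\ge0$ globally, and the discriminant inequality where it is supposed to hold) are exactly where the difficulty sits --- both sides of $A_0\ge0$ agree to order $(y-1)^2$ at $y=1$, so the sign is decided by higher-order terms --- and they are left unproved; you yourself note that the relation $z=y^{m-1}$ obstructs an algebraic factorisation, which is precisely the obstruction the paper bypasses by relaxing $z$ to an independent variable in Lemma~\ref{Lem:finalContradiction} and arguing by continuity in $m$. As it stands the proposal is a plausible programme, not a proof.
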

%---------------------------------------------------------------------
\begin{proof} Keeping the notations of Lemma~\ref{Lem:f}, our goal is to prove that $y\mapsto f\(a,m,y,y^{m-1}\)$ is nonnegative for any $y\in(0,\gamma_m)$ whenever the parameters $(a,m)$ are \emph{admissible}.

Let us start by considering the value of $f(a,m,y,z)$ at some remarkable points.\\
$\bullet$ At $(y,z)=(0,0)$, we have $f(a,m,0,0)=2\,(1-a)\,(2-a)\,(m-1)>0$.\\
$\bullet$ At $(y,z)=(1,1)$, we have $f(a,m,1,1)=0$ but a Taylor expansion shows that
\be{f:lim}
f\(a,m,y,y^{m-1}\)=\frac1{12}\,(m-1)^3\,c_{m,a}\,(y-1)^4+O\((y-1)^5\)\quad\mbox{as}\quad y\to1
\ee
for any $a\in(0,1/2)$, where
\[
c_{m,a}=12\,m\,a\,(a-m-1)+m\(2\,m^2+7\,m+2\)\ge c_{m,1/2}=m\,(m+1)\,(2\,m-1)>0\,.
\]
This proves that $y\mapsto f\(a,m,y,y^{m-1}\)$ is positive for any $y\in(1-\varepsilon,1)\cup(1,1+\varepsilon)$ for some $\varepsilon=\varepsilon(a,m)>0$ whenever the parameters $(a,m)$ are \emph{admissible}.\\
$\bullet$ At $(y,z)=(\gamma_m,m)$, we have
\[
f(a,m,\gamma_m,m)=(m-1)^3\,c_{m,a}\,,\quad c_{m,a}=2\,a^2-3\,a\,(2\,m+2-m\,\gamma_m)+2\(2\,m^2+5\,m+2\).
\]
Using
\[
\inf_{m\in(1,2)}\frac34\,(2\,m+2-m\,\gamma_m)=\lim_{m\to1_+}\frac34\,(2\,m+2-m\,\gamma_m)=3\,(1-e/4)>1/2\,,
\]
we have
\begin{multline*}
c_{m,a}>c_{m,1/2}=(4-3\,\gamma_m)\,m^2+\(7-\tfrac32\,\gamma_m\)m+\tfrac32\\
>\lim_{m\to1_+}\((4-3\,\gamma_m)\,m^2+\(7-\tfrac32\,\gamma_m\)m+\tfrac32\)=\tfrac12\,(25-9\,e)>0\,.
\end{multline*}

\medskip In the limit as $m\to2$, we have $y=z$ and
\be{f2}
f(a,2,y,z)=2\,(1-a)\,(2-a)\,(y-1)^4\,.
\ee
Hence $f\(a,2,y,y^{m-1}\)$ is positive unless $y=1$. We are now going to take $a\in(0,1/2)$ and consider $m\in(1,2)$ as a parameter. Let us prove that for some $m_*\in(1,2)$, we have $f\(a,m,y,y^{m-1}\)\ge0$ for any $(m,y)$ such that \hbox{$m_*<m<2$} and $0\le y\le\gamma_m$. We assume by contradiction that there are two sequences $(m_k)_{k\in\N}$ and $(y_k)_{k\in\N}$ such that $1<m_k<2$ for any $k\in\N$, $\lim_{k\to+\infty}m_k=2$, $0\le y_k\le\gamma_{m_k}$ and $f\big(a,m_k,y_k,y_k^{m_k-1}\big)<0$ for any $k\in\N$. Up to the extraction of a subsequence, $(y_k)_{k\in\N}$ converges to some limit $y_\infty\in[0,2]$ and by continuity of $f$ we know that $f\(a,2,y_\infty,y_\infty\)\le0$: the only possibility is $y_\infty=1$ by~\eqref{f2}. Since $f\big(a,m_k,y_k,y_k^{m_k-1}\big)<0=f(a,m_k,1,1)$, we know that $y_k\neq1$. Since \hbox{$\lim_{k\to+\infty}y_k=1$}, this contradicts~\eqref{f:lim} or, to be precise, $|y_k-1|\ge\varepsilon(a,m_k)$, as the reader is invited to check that $\liminf_{k\to+\infty}\varepsilon(a,m_k)>0$ because $f$ is a smooth function of all of its arguments. Let
\[
m_*(a):=\inf\Big\{m\in(1,2)\,:\,f\(a,m,y,y^{m-1}\)\ge0\;\forall\,y\in[0,\gamma_m]\Big\}\,.
\]
For any $a\in(0,1/2)$, we have $m_*(a)<2$. 

We want to prove that $m_*(a)=1$. Again, let us argue by contradiction: if $m_*(a)>1$, and assume that there are two sequences $(m_k)_{k\in\N}$ and $(y_k)_{k\in\N}$ such that $1<m_k<m_*(a)$ for any $k\in\N$, $\lim_{k\to+\infty}m_k=m_*(a)$, $0\le y_k\le\gamma_{m_k}$ and $f\big(a,m_k,y_k,y_k^{m_k-1}\big)<0$ for any $k\in\N$. Up to the extraction of a subsequence, $(y_k)_{k\in\N}$ converges to some limit $y_\infty\in[0,2]$ and by continuity of $f$ we know that $f\(a,m_*(a),y_\infty,y_\infty^{m-1}\)\le0$. For the same reasons as above, $y_\infty=0$, $y_\infty=1$ and $y_\infty=\gamma_{m_*(a)}$ are excluded. Altogether, we have proved that for
\[
m=m_*(a)\,,
\]
we have $f\(a,m,y_\infty,y_\infty^{m-1}\)=0$ for some $y_\infty\in(0,1)\cup(1,\gamma_m)$ and
\[
f\(a,m,y,y^{m-1}\)\ge0\quad\forall\,y\in(0,1)\cup(1,\gamma_m)\,,
\]
so that $y_\infty$ is a local minimizer of $y\mapsto f\(a,m,y,y^{m-1}\)$. As a consequence, we have shown that for $m=m_*(a)>1$ and $y=y_\infty\neq1$, we have
\be{Contradiction}
f\(a,m,y,y^{m-1}\)=0\quad\mbox{and}\quad\frac\partial{\partial y}f\(a,m,y,y^{m-1}\)=0\,.
\ee
As we shall see next, this contradicts Lemma~\ref{Lem:finalContradiction}. Hence $y\mapsto f(a,m,y,y^{m-1})$ takes nonnegative values for any \emph{admissible} parameters $(a,m)$ for $1<m<2$. By Lemma~\ref{Lem:f}, $K'(y)\le0$, thus completing the proof.
\end{proof}

We still have to prove that~\eqref{Contradiction} has no solution $y\in(0,1)\cup(1,\gamma_m)$. Since
\[
y\,\frac\partial{\partial y}f\(a,m,y,y^{m-1}\)=y\,\frac{\partial f}{\partial y}(a,m,y,z)+(m-1)\,z\,\frac{\partial f}{\partial z}(a,m,y,z)\,,
\]
we can relax the condition $z=y^{m-1}$ and prove the slightly more general result.
%---------------------------------------------------------------------
\begin{lemma}\label{Lem:finalContradiction}
With the notations of Lemma~\ref{Lem:f}, assume that $m>1$, $y\in(0,\gamma_m]$ and $z\in(0,m]$. For any \emph{admissible} parameters $(a,m)$, if
\begin{subequations}
\begin{align}
&f(a,m,y,z)=0\,,\label{Elim1}\\
&y\,\frac{\partial f}{\partial y}(a,m,y,z)+(m-1)\,z\,\frac{\partial f}{\partial z}(a,m,y,z)=0\,.\label{Elim2}
\end{align}
\end{subequations}
then $z=1$.
\end{lemma}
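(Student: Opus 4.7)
The key structural observation I would exploit is that $f(a,m,y,z)$ is \emph{affine} in $y$. Using $m-1-m\,y+y\,z=(m-1)+y\,(z-m)$ one can write
\[
f(a,m,y,z) = A(a,m,z) + y\,B(a,m,z),
\]
where $A$ is a polynomial of degree $2$ in $z$, $B$ of degree $3$ in $z$, and both are quadratic in $a$. A short computation gives $A(a,m,1)=6\,(m-1)^3$ and $B(a,m,1)=-6\,(m-1)^3$, so $-A/B=1$ at $z=1$, consistent with the known solution $(y,z)=(1,1)$ identified in the proof of Lemma~\ref{Lem:K'}.

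Given this linearity, \eqref{Elim1} lets me solve for $y$: provided $B(a,m,z)\neq 0$, one has $y=-A(a,m,z)/B(a,m,z)$. Substituting into \eqref{Elim2}, which reads $y\,B+(m-1)\,z\,(A_z+y\,B_z)=0$, and clearing denominators yields a single elimination identity
\[
\Phi(a,m,z) := A\,B-(m-1)\,z\,\big(A_z\,B - A\,B_z\big) = 0.
\]
Here $\Phi$ is a polynomial of degree at most $5$ in $z$. A direct evaluation at $z=1$ using the explicit values of $A,B,A_z,B_z$ gives $\Phi(a,m,1)=0$, and one expects a factorization $\Phi(a,m,z)=(z-1)^k\,\Psi(a,m,z)$ with $k\ge 2$, in line with the order-$4$ vanishing in~\eqref{f:lim} of $f$ at $(y,z)=(1,1)$ along the scaling curve $z=y^{m-1}$. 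The goal is then to establish this factorization explicitly and to show that the residual factor $\Psi(a,m,z)$ is strictly sign-definite on $z\in(0,m]\setminus\{1\}$ for every admissible pair $(a,m)$.

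To establish the sign-definiteness I would exploit two features. First, $f$, and hence $\Phi$, is \emph{quadratic in $a$} with leading coefficient a fixed-sign multiple of $(z-1)^2$, so it suffices to check positivity at the two endpoints $a=0$ and $a=\tfrac12$ and at the critical value $a^{*}(m,z)$ of the quadratic. Second, the set $z\in(0,m]\setminus\{1\}$ splits naturally into $(0,1)$ and $(1,m]$, on which the elementary factors $(z-1)$, $(m\,z-1)$ and $(z-m)$ have definite signs; combined with $m\in(1,2)$, this reduces the problem to a finite collection of one-variable polynomial inequalities. The sanity check $m\to 2^{-}$ is provided by~\eqref{f2}: there $f=2\,(1-a)\,(2-a)\,(z-1)^4$, which vanishes only at $z=1$, so $\Psi>0$ near the upper end of the admissible $m$-range. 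The degenerate locus $B(a,m,z)=0$ must be handled separately by noting that there \eqref{Elim1} forces $A(a,m,z)=0$, and then checking that $A$ and $B$ share no root in $z\in(0,m]\setminus\{1\}$ for admissible $(a,m)$; this follows once $\mathrm{Res}_z(A,B)$ is shown to be sign-definite on the admissible parameter region.

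The main obstacle I anticipate is the bookkeeping in the final positivity step for $\Psi$. Because $\Phi$ is a product of polynomial factors with coefficients depending on $(a,m)\in(0,\tfrac12)\times(1,2)$, the intermediate expressions are sizeable and resist a purely hand-written treatment; some symbolic or Sturm-sequence assistance appears unavoidable. A secondary concern is rigorously extracting the $(z-1)^k$ factor from $\Phi$, which reduces to checking the vanishing of $\Phi$ and its first few $z$-derivatives at $z=1$ directly from the explicit formulas for $A$ and $B$.
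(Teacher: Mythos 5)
Your elimination skeleton coincides with the paper's: both arguments exploit that $f$ is affine in $y$, solve \eqref{Elim1} for $y$ as a ratio of two polynomials in $(a,m,z)$ (your $-A/B$ is the paper's $\mathsf n/\mathsf d$, with $A$ quadratic and $B$ cubic in $z$, as you say), and substitute into \eqref{Elim2} to obtain a single polynomial condition in $z$ of degree at most $5$. Your checks $A(a,m,1)=6\,(m-1)^3=-\,B(a,m,1)$ are correct. The divergence is in the closing step, which is precisely where your proposal stops being a proof. You anticipate a factorization $\Phi=(z-1)^k\,\Psi$ with $k\ge2$ and then plan to show that $\Psi$ is sign-definite on $z\in(0,m]\setminus\{1\}$ by splitting the domain according to the signs of $(z-1)$, $(m\,z-1)$, $(z-m)$, using the quadratic dependence on $a$, and invoking Sturm sequences or symbolic computation; none of this is carried out, and you explicitly defer it. The paper's resolution is both sharper and much lighter: after dividing out the $(z-1)$ factors (one finds $k=3$, so the residual is a genuine quadratic in $z$), the discriminant of that quadratic factors as
\[
\delta(a,m)=-\,3\,(a-1)^2\,(m-1)^2\,(a-m)^2\(5\,a^2-10\,a\,(m+1)-3\,m^2+14\,m-3\)\,,
\]
and the last factor is positive for all admissible $(a,m)$, i.e.\ $a\in(0,\tfrac12)$, $m\in(1,2)$. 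Hence $\delta<0$ and the quadratic has \emph{no real roots at all}: no restriction to $z\in(0,m]$, no case analysis, and no endpoint checks in $a$ are needed. Your separate treatment of the locus $B=0$ is a reasonable precaution but becomes moot once one works with the cleared polynomial identity. As it stands, the decisive inequality in your proposal (positivity of $\Psi$) is asserted rather than proved; to complete the argument you would either have to compute the residual quadratic and verify its discriminant as above, or actually execute the considerably heavier positivity program you outline.
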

%---------------------------------------------------------------------
\begin{proof} Solving the system~\eqref{Elim1}--\eqref{Elim2} is an \emph{elimination problem} because the function $f$, as defined in Lemma~\ref{Lem:f}, is a polynomial in the variables $a$, $y$ and~$z$. Since~\eqref{Elim1} is a first order equation in $y$, we can eliminate this variable and find that
\[
y=\frac{\mathsf n(a,m,z)}{\mathsf d(a,m,z)}
\]
with
\begin{align*}
\mathsf n(a,m,z):=&\;2\,(m-1)\(a^2\,(z-1)^2-3\,a\,(z-1)\,(m\,z-1)\right.\\
&\hspace*{1cm}\left.+\,2\,m^2\,z^2+\(m^2-6\,m+1\)z+2\),\\
\mathsf d(a,m,z):=&\;m\(2\,a^2\,(z-1)^2+3\,a\,(z+1)^2\,(z-1)+9\,z^2+8\,z+1\)\\
&\hspace*{1cm}-2\,z\(a^2\,(z-1)^2+3\,a\,(z-1)+z+2\)\\
&\hspace*{1cm}-m^2\,z\(6\,a\,(z-1)+z^2+8\,z+9\)+2\,m^3\,z\,(2\,z+1)\,.
\end{align*}
After replacing, we obtain after lengthy but elementary computations that solving~\eqref{Elim2} under the condition $z\neq1$ is reduced to a second order equation in $z$, whose discriminant~is
\[
\delta(a,m):=-\,3\,(a-1)^2\,(m-1)^2\,(a-m)^2\(5\,a^2\!-\!10\,a\,(m+1)\!-\!3\,m^2\!+\!14\,m\!-\!3\).
\]
Since $5\,a^2-10\,a\,(m+1)-3\,m^2+14\,m-3$ takes only positive values for \emph{admissible} $(a,m)$, there are no other roots than $z=1$. This is the desired contradiction, which completes the proof.
\end{proof}
Then, as a straightforward consequence of Lemmas~\ref{Prop:K} and~\ref{Lem:K'}, we have the
%---------------------------------------------------------------------
\begin{lemma}\label{Lem:m>222} If $1<m<2$, then the minimal period $T(E)$ is a monotonically increasing function of~$E\in(0,E_*)$.\end{lemma}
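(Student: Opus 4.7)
The proof plan is essentially to assemble the two preceding lemmas. The recipe established in Section~\ref{section4} asserts that $T(E)$ is increasing whenever $K$, defined by~\eqref{fK}, is decreasing on $[A,B]$ (Lemma~\ref{Lem:K}); and Lemma~\ref{Lem:K'} has just established exactly this monotonicity of $K$ for the potential $\Pot$ of~\eqref{initpot} in the regime $2<p<q<2\,p$.

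First I would observe that the case $1<m<2$, with $m=q/p$, is precisely the regime $p<q<2\,p$. Combined with the standing hypothesis $p>2$ of Theorem~\ref{Thm:Main3}, this places us squarely in the scope of Lemma~\ref{Lem:K'}. The potential~\eqref{initpot} was already observed at the start of Section~\ref{Section:application} to satisfy both~\eqref{H1} and~\eqref{H3}, so no further verification is required. Here $A=1$ and $B=\gamma_m$.

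Then Lemma~\ref{Lem:K'} supplies that $K$ is monotonically decreasing on $[1,\gamma_m]$, which is precisely the hypothesis of Lemma~\ref{Lem:K}. Applying that lemma, I obtain $J'(E)>0$ for all $E\in(0,E_*)$. Combined with the identity
\[
\frac{T'(E)}{T(E)}=\frac{p-2}{2\,p}\,\frac1E+\frac{J'(E)}{J(E)}
\]
derived in Section~\ref{section4} (in which, under $p>2$, the first term is also positive), this yields the strict monotonicity of $E\mapsto T(E)$ on $(0,E_*)$.

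The main obstacle is therefore not in this final step, which is merely a clean assembly. It has already been absorbed into the long polynomial/elimination analysis of Lemmas~\ref{Lem:K'} and~\ref{Lem:finalContradiction}, which is unavoidable precisely because in the range $1<m<2$ the shortcut Corollary~\ref{maincor} fails (one loses either the convexity of $h$ or that of $1/(h')^2$), forcing a direct sign analysis of $K'$ and ultimately the discriminant computation that rules out critical points other than $z=1$.
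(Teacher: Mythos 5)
Your proposal is correct and matches the paper's own argument, which states Lemma~\ref{Lem:m>222} as a direct consequence of Lemma~\ref{Lem:K'} (monotonicity of $K$ in the range $1<m<2$) combined with the sufficient condition of Lemma~\ref{Lem:K}. Your additional remarks on the failure of Corollary~\ref{maincor} in this regime and on the role of the sign of $\frac{p-2}{2p}\frac1E$ are consistent with the text and add nothing that needs correction.
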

%---------------------------------------------------------------------

Finally, we can conclude with the proof of our main theorem.
\begin{proof}[of Theorem~\ref{Thm:Main3}] The monotonicity of the minimal period $T$ of the solution of~\eqref{ELS2} follows from Lemmas~\ref{Lem:m=2},~\ref{Lem:m>22}, and~\ref{Lem:m>222}. The asymptotic behaviours of $T$ as $E\to0_+$ and as $E\to( E_*)_-$ are established in Lemma~\ref{asymp}.
\end{proof}

%%%%%%%%%%%%%%%%%%%%%%%%%%%%%%%%%%%%%%%%%%%%%%%%%%%%%%%%%%%%%%%%%%%%%%
%%%%%%%%%%%%%%%%%%%%%%%%%%%%%%%%%%%%%%%%%%%%%%%%%%%%%%%%%%%%%%%%%%%%%%
\appendix\section{A variational problem}\label{App}

A central motivation for studying~\eqref{ELS2} arises from the minimization problem
\be{functional}
\mu(\lambda):=\inf_{f\in\mathrm W^{1,p}(\mathbb S^1)\setminus{\{0\}}}\frac{\nrm{f'}p^2+\lambda\,\nrm fp^2}{\nrm fq^2}
\ee
where $q>p$ is an arbitrary exponent and $\mathbb S^1$ is the unit circle. This problem can also be seen as the search for the optimal constant in the \emph{interpolation inequality}
\[
\nrm{f'}p^2+\lambda\,\nrm fp^2\ge\mu(\lambda)\,\nrm fq^2\quad\forall\,f\in\mathrm W^{1,p}(\mathbb S^1)\,.
\]
Testing the inequality by constant functions shows that $\mu(\lambda)\le\bar\mu(\lambda):=\lambda\,|\mathbb S^1|^{\frac2p-\frac2q}$. If $p=2$, one can consider the interpolation inequality on $\S^d$ for any integer $d\ge1$ and it is well known from the \emph{carr\'e du champ} method~\cite{Bakry1985,MR3155209} that equality holds if and only if $\lambda\le d/(q-2)$. If $\lambda>d/(q-2)$, we have $\mu(\lambda)<\bar\mu(\lambda)$ and optimal functions are non-constant, so that \emph{symmetry breaking} occurs. This is a basic mechanism in \emph{phase transition} theory that can be interpreted as a bifurcation of a branch of non-trivial functions from a branch of constant functions. An important question is therefore to find the largest value of $\lambda>0$ such that $\mu(\lambda)=\bar\mu(\lambda)$. This is an open question for $p>2$, even if $d=1$.

In dimension $d=1$, the bifurcation problem degenerates in the limit case $p=2$, for which $\lambda_1=\lambda_2=1/(q-2)$ according to~\cite{Bakry1985}. We refer to~\cite[Section~1]{BDL} for an introduction to the minimization problem~\eqref{functional} for $p=2$, the issue of the branches and the monotonicity of the period problem. In that case, proving that \emph{symmetry breaking} occurs if and only if $\lambda>1/(q-2)$ can be reduced to a proof of the \emph{monotonicity of the minimal period} using Chicone's criterion~\cite[Theorem~A]{CH}, which provides an interesting alternative to the \emph{carr\'e du champ} method. The study of bifurcation problems using the period function goes back to~\cite{MR607786} in case of equations with cubic non-linearities and was later extended to various classes of Hamiltonian systems in~\cite{Schaaf,Rothe_1993,Cop,Coll,Gas}. It is therefore natural to consider the case $p>2$.

The minimization problem for $p>2$ was studied in~\cite{DolGAMA-2020}. There is an optimal function for~\eqref{functional} and the corresponding Euler-Lagrange equation turns out to be the nonlinear differential equation with nonlocal terms given by
\be{ELS0}
-\,\nrm{f'}p^{2-p}\,\big(\phi_p(f')\big)'+\lambda\,\nrm fp^{2-p}\,\phi_p(f)=\mu(\lambda)\,\nrm fq^{2-q}\,\phi_q(f)\,,
\ee
where we look for positive solutions on $W^{1,p}(\mathbb S^1)\setminus{\{0\}}$ or equivalently positive $2\pi$-periodic solutions on $\R$. Using the homogeneity and the scaling properties of~\eqref{ELS0}, one can get rid of the norms, $\lambda$ and $\mu$ so that the problem is reduced to~\eqref{ELS2}, with a period which is not anymore $2\,\pi$. The study of the branches of positive solutions of~\eqref{ELS0} parametrized by $\lambda>0$ and the properties of the function $\lambda\mapsto\mu(\lambda)$ is reduced to the study of all positive periodic solutions on~$\R$ of~\eqref{ELS2} and the corresponding minimal periods. Details can be found in~\cite[Section~4]{DolGAMA-2020}.

So far, we do not know the precise value of $\lambda$ for which there is symmetry breaking but according to~\cite{DolGAMA-2020} \emph{rigidity} holds if $0<\lambda<\lambda_1$ for some $\lambda_1>0$, where rigidity means that any positive solution of~\eqref{ELS0} is a constant. In that range, we have $\mu(\lambda)=\bar\mu(\lambda)$. On the contrary, one can prove that \emph{symmetry breaking} occurs if $\lambda>\lambda_2$ for some $\lambda_2>\lambda_1$, so that $\mu(\lambda)<\bar\mu(\lambda)$ and~\eqref{ELS0} admits non-constant positive solutions for any $\lambda>\lambda_2$. If $p=2$, a precise description of the threshold value of $\lambda$ is known in the framework of Markov processes if $q$ is not too large (see~\cite{MR3155209} for an overview with historical references that go back to~\cite{Bakry1985}) and from~\cite{MR1134481,Demange_2008,DEKL,DolEsLa-APDE2014,Dolbeault20141338,1504,Dolbeault_2020} using \emph{entropy methods} applied to nonlinear elliptic and parabolic equations; also see~\cite{Dolbeault:2021wb} for an overview and extensions to various related variational problems. The results of~\cite{DolGAMA-2020} are also based on \emph{entropy methods}. Almost nothing is known beyond~\cite{DolGAMA-2020} if $p>2$, even for $d=1$. The results of this paper are a contribution to a better understanding of the fundamental properties of the solutions of~\eqref{ELS2} in the simplest case, for $p>2$.

%%%%%%%%%%%%%%%%%%%%%%%%%%%%%%%%%%%%%%%%%%%%%%%%%%%%%%%%%%%%%%%%%%%%%%
%%%%%%%%%%%%%%%%%%%%%%%%%%%%%%%%%%%%%%%%%%%%%%%%%%%%%%%%%%%%%%%%%%%%%%
\vspace*{-0.5cm}\section{A comparison of the main results}\label{App:Comparison}

Here we consider the results of Theorems~\ref{Thm:Main1},~\ref{Thm:Main2} and~\ref{Thm:Main3} when the potential $\Pot$ is given by~\eqref{initpot} with $2<p<q$. Let $m_*(p)$ be the unique positive root of $m\mapsto(p-1)\,m^{m/(m-1)}-p\,m-p+1$,
\[
p_*:=\frac{e-1}{e-2}\approx2.39221\,,
\]
and recall that $m=q/p$, $B=m^{1/(q-p)}$, $\mathcal R(w):=|\Pot'(w)|^2-p'\,\Pot(w)\,\Pot''(w)$ with \hbox{$p'=p/(p-1)$}.
%---------------------------------------------------------------------
\begin{proposition}\label{Prop:Thms1-3} With the above notation, we have $m_*(p)>1$ if and only if $p>p_*$. In the range $p_*<p<q<p\,m_*(p)$, the minimum of $\mathcal R$ on $[0,B]$ is negative.\end{proposition}
%---------------------------------------------------------------------
\noindent As a consequence, Theorem~\ref{Thm:Main1} applied to $\Pot(w)$ given by~\eqref{initpot} does not cover the whole range of exponents $q$ in Theorem~\ref{Thm:Main3}.
\begin{proof}[of Proposition~\ref{Prop:Thms1-3}] A direct computation of 
\[
\mathcal R(B)=\frac1{p-1}\,(m-1)^2\,m^{\frac{p-2}{p\,(m-1)}-1}\((p-1)\,m^\frac m{m-1}-p\,m-p+1\)
\]
shows that $\mathcal R(B)<0$ if $p>p_*$ and $p<q<p\,m_*(p)$.\end{proof}

\medskip It is a natural question to ask whether the result of Theorem~\ref{Thm:Main3} is a consequence of Theorem~\ref{Thm:Main2}, or not. Although there are numerical evidences, at least for some values of $p$ and $q$, we are not able to prove that $\Pot/(\Pot')^2$ is convex if $\Pot$ is given by~\eqref{initpot} with $2<p<q$, and we do not know the answer. The proofs of Theorem~\ref{Thm:Main2} and Theorem~\ref{Thm:Main3} share some computations, but also differ significantly, and this deserves further comments.

In Theorem~\ref{Thm:Main3}, the first ingredient is the change of variables $w\mapsto y=w^p$, which allows to define $h(y)$ by~\eqref{D1}, such that $|h(y)|=\sqrt{\Pot(w(y))}$ with $w(y)=y^{1/p}$, and the key quantity $K(y)$ by~\eqref{fK}. With $w=w(y)$, an elementary computation shows that
\[
K(y)=-\,2\,p\,y^{2/p'}\,\frac{|\Pot'(w)|^2-2\,\Pot(w)\,\Pot''(w)}{|\Pot'(w)|^3}\,.
\]
Proposition~\ref{Prop:K} and Corollary~\ref{maincor} are central in the proof of Theorem~\ref{Thm:Main3} and both rely on the assumption that $K$ is decreasing on $[0,B^p]$. In Theorem~\ref{Thm:Main2}, the key assumption is the condition that $\Pot/(\Pot')^2$ is a convex function, which  simply means that 
\[
w\mapsto-\,\frac{|\Pot'(w)|^2-2\,\Pot(w)\,\Pot''(w)}{\Pot'(w)^3}
\]
is nonincreasing on $[0,B]$. This is similar to the condition on $K$, but only if $\Pot'(w)$ is positive.

Compared to the proof of Theorem~\ref{Thm:Main2}, Theorem~\ref{Thm:Main3} involves several additional ingredients. The proof of Proposition~\ref{Prop:K} is based on the change of variables $y\mapsto\theta\in(-\pi/2,\pi/2)$ such that $\sqrt E\,\sin\theta=h(y)$ and allows us to write an identity in terms of $K\big(y(E,\theta)\big)-K\big(y(E,-\,\theta)\big)$ with $\theta\in(0,\pi/2)$. This difference has a sign precisely because of the monotonicity of $K$. Finally, the computation of $K'$ involves integer powers of $y$ and $z=y^{m-1}$, considered as independent variables and the proof that that $K'<0$ relies on elimination techniques applied to the polynomial in $y$ and $z$. It is not excluded that a similar technique could be used to prove directly that $\Pot/(\Pot')^2$ is convex if $\Pot$ is given by~\eqref{initpot}, but we have not been able to implement it.

%%%%%%%%%%%%%%%%%%%%%%%%%%%%%%%%%%%%%%%%%%%%%%%%%%%%%%%%%%%%%%%%%%%%%%
%%%%%%%%%%%%%%%%%%%%%%%%%%%%%%%%%%%%%%%%%%%%%%%%%%%%%%%%%%%%%%%%%%%%%%
\vspace*{-0.8cm}%\bibliographystyle{siam}
%\bibliography{references}

%%%%%%%%%%%%%%%%%%%%%%%%%%%%%%%%%%%%%%%%%%%%%%%%%%%%%%%%%%%%%%%%%%%%%%
%%%%%%%%%%%%%%%%%%%%%%%%%%%%%%%%%%%%%%%%%%%%%%%%%%%%%%%%%%%%%%%%%%%%%%
\hfuzz=4pt
\affiliationone{J.~Dolbeault\\ % Jean Dolbeault
CEREMADE (CNRS UMR n$^\circ$ 7534),\\ PSL Univ., Universit\'e Paris-Dauphine,\\ Place de Lattre de Tassigny,\\ 75775 Paris 16, France
\email{dolbeaul@ceremade.dauphine.fr}}
\affiliationtwo{M.~Garc\'ia-Huidobro\\ % Marta Garc\'{i}a-Huidobro
Departamento de Matem\'{a}ticas,\\ Pontificia Universidad Cat\'{o}lica de Chile,\\ Casilla 306, Correo 22,\\ Santiago, Chile
\email{mgarcica@uc.cl}}
\affiliationthree{R.~Man\'asevich\\ % Ra\'ul Man\'asevich
Departamento de Ingenier\'ia Matem\'atica and CMM (CNRS IRL n$^\circ$ 2807),\\
FCFM, Universidad de Chile,\\ Casilla 170, Correo 3,\\Santiago, Chile
\email{manasevi@dim.uchile.cl}}

\end{document}